\newcommand{\parm}{\mathbf{parameterise}}
\newcommand{\impl}{\mathbf{implicitise}}
\newcommand{\smear}{\mathbf{smear}}
\newcommand{\cert}{\mathbf{certify}}
\newcommand{\cV}{\mathcal{V}}
\newcommand{\Sh}{\mathrm{Sh}}
\renewcommand{\phi}{\varphi}
\renewcommand{\Vec}{\mathbf{Vec}}
\newcommand{\FI}{\mathbf{FI}}
\newcommand{\FIop}{\mathbf{FI^{op}}}
\newcommand{\Map}{\mathbf{Map}}
\begin{document}

\title{Implicitisation and parameterisation in polynomial functors}
\author{Andreas Blatter} 
\thanks{AB was supported by Swiss National Science
Foundation grant 200021\_191981}
\email{andreas.blatter@unibe.ch}
\address{Mathematical Institute, University of Bern,
Alpeneggstrasse 22, 3012 Bern, Switzerland}
\author{Jan Draisma}
\thanks{JD was partially supported by SNSF grant
200021\_191981 and Vici grant 639.033.514 from
the Netherlands Organisation for Scientific Research}
\email{jan.draisma@unibe.ch}
\address{Mathematical Institute, University of Bern,
Sidlerstrasse 5, 3012 Bern, Switzerland; and Department of
Mathematics and Computer Science, Eindhoven University of
Technology, P.O.~Box 513, 5600MB Eindhoven, The Netherlands}
\author{Emanuele Ventura}
\email{emanuele.ventura@polito.it, emanueleventura.sw@gmail.com}
\address{Politecnico di Torino, 
Dipartimento di Scienze Matematiche ``G.L. Lagrange'', Corso Duca degli Abruzzi 24
10129 Torino, Italy}

\maketitle

\begin{abstract}
In earlier work, the second author showed that a closed subset
of a polynomial functor can always be defined by finitely many
polynomial equations. In follow-up work on $\GL_\infty$-varieties,
Bik-Draisma-Eggermont-Snowden showed, among other things, that in
characteristic zero every such closed subset is the image of a morphism
whose domain is the product of a finite-dimensional affine variety and
a polynomial functor. In this paper, we show that both results can be
made algorithmic: there exists an algorithm $\impl$ that takes as input
a morphism into a polynomial functor and outputs finitely many equations
defining the closure of the image; and an algorithm $\parm$ that takes as
input a finite set of equations defining a closed subset of a polynomial
functor and outputs a morphism whose image is that closed subset.
\end{abstract}

\maketitle

\section{Introduction}

\subsection{Implicitisation}
An important theme in computational algebraic geometry is {\em
implicitisation}: given a list $(\phi_1,\ldots,\phi_n)$ of polynomials in
$K[x_1,\ldots,x_m]$, which represent a polynomial map $\phi$ from
the $m$-dimensional affine space $\AA^m$ to $\AA^n$ over the field $K$, the challenge is to
compute equations for the Zariski closure
$\overline{\im(\phi)}$ of the image of $\phi$. This
challenge is solved, at least theoretically, by {\em elimination} using
Buchberger's algorithm; e.g., see~\cite[\S3.3]{CLS15}.

\subsection{Implicitisation in families}
But now consider a scenario where one is given not a single polynomial
map, but rather a family $\phi_i:\AA^{m_i} \to \AA^{n_i}$ of polynomial
maps depending on a discrete parameter $i$ that takes infinitely
many values. If the ambient spaces $\AA^{m_i}$ and $\AA^{n_i}$ and
the polynomial map $\phi_i$ vary favourably with $i$, it is sometimes
possible to find finitely many equations that capture the image closures
of all $\phi_i$ at once. 

\subsection{Implicitisation over categories}
\label{ssec:Category}
To make the phrase ``vary favourably'' concrete, assume that 
$i$ ranges through the objects of a category $C$ and $\AA(i)$ is a
finite-dimensional affine space varying functorially with
$i$. This means that 
for each $\pi \in \Hom_C(i,j)$ we have a linear map $\AA(\pi):\AA(i)
\to \AA(j)$ such that $\AA(\id_i)=\id_{\AA(i)}$ and 
$\AA(\sigma \circ \pi)=\AA(\sigma) \circ \AA(\pi)$ for
$\sigma \in \Hom_C(j,k)$.
Suppose that $\AA'(i)$ is another affine space depending
functorially on $i \in C$, and finally that, for any $i \in C$, we have
a polynomial map $\phi_i:\AA(i) \to \AA'(i)$ such that the following
diagram commutes for every $\pi \in \Hom_C(i,j)$:
\[ 
\xymatrix{\AA(i) \ar[r]^{\phi_i} \ar[d]_{\AA(\pi)} & \AA'(i)
\ar[d]^{\AA'(\pi)} \\
\AA(j) \ar[r]_{\phi_j} & \AA'(j).}
\]
In this case, if $f$ is a polynomial equation for the image
closure $\overline{\im(\phi_j)}$ of
$\phi_j$, then $f \circ \AA'(\pi)$ is a polynomial equation
for $\overline{\im(\phi_i)}$---and, since we assumed that $\AA'(\pi)$
is linear, of the same degree. We then ask:
\begin{enumerate}
\item Do there exist finitely many $j \in C$ such that the equations for
those $\overline{\im(\phi_j)}$, by pulling back along the linear
maps $A'(\pi)$ for all relevant $\pi$, define
$\overline{\im(\phi_i)}$ for all $i \in C$?
\item If so, does there exist an {\em algorithm} for computing these
finitely many $j$?
\end{enumerate}

A well-known case where the answer to the two questions above is ``yes''
is that where $C$ is the opposite category $\FIop$ of the category $\FI$
of finite sets with injections, $\AA(i)=(\AA^n)^i$, $\AA'(i)=(\AA^{n'})^i$
for some fixed $n$ and $n'$, and the maps $\AA(i) \to \AA(j),
\AA'(i) \to \AA'(j)$ corresponding to an injection $j \to i$ are the
canonical projections. In that case, it is known that the kernel of the
$\FI$-homomorphism $\phi^*$ dual to the $\FIop$-polynomial map $\phi$ is
finitely generated and can be computed using a version of Buchberger's
algorithm, and this has been applied to problems in algebraic
statistics \cite{Cohen87,Hillar09,Hillar13b,Hillar18,Brouwer09e}. The
setting discussed in the current paper is of a very different flavour
in that it involves continuous symmetries rather than discrete
symmetries, and it is also significantly more complicated. One cause for trouble is that in the
setting below, we do not actually know whether the kernel of the relevant
algebra homomorphism is finitely generated, and so we have to settle
for finding set-theoretic equations for the $\overline{\im(\phi_i)}$.

\subsection{Implicitisation in polynomial functors}
In the case where $C$ is the category $\Vec$ of finite-dimensional
$K$-vector spaces, and both $\AA$ and $\AA'$ are polynomial functors
$\Vec \to \Vec$ (see below for definitions), the second author
established a positive answer to the first question above
\cite{Draisma17}; see
Theorem~\ref{thm:Finiteness} below.

\begin{ex} \label{ex:Cubics}
Here are two instances of this setting:
\begin{enumerate}
\item $\AA(V)=V^k$ for some positive integer $k$, $\AA'(V)=S^3 V$, the
third symmetric power of $V$, and 
\[ \phi_V(v_1,\ldots,v_k):=v_1^3 + \cdots + v_k^3. \]
The image of $\phi_V$ is the set of cubic homogeneous polynomials in $\dim(V)$
variables of {\em Waring rank $\leq k$}, and its closure is the variety
of polynomials of {\em border Waring rank $\leq k$}. 

\item $\AA(V)=(S^2 V)^k \times V^k$, $\AA'(V)=S^3 V$, and
\[ \phi_V(q_1,\ldots,q_k,v_1,\ldots,v_k):=q_1 v_1 + \cdots +
q_k v_k. \]
In this case, the image of $\phi_V$ is closed
\cite{Derksen17} and consists of
the cubics of {\em q-rank $\leq k$}. 
\end{enumerate}
In the first case, it is well-known that generators of the
vanishing ideal of
$\overline{\im(\phi_U)}$ for $U$ of dimension $k+1$ pull
back to generators of the ideal of $\overline{\im(\phi_V)}$
for all $V$; this is called {\em symmetric inheritance} in
\cite{Landsberg10}. Much more is known for small values of
$k$; in particular, for $k \leq 2$, the ideal is generated
by $3 \times 3$-minors of catalecticant matrices
\cite{Raicu10b}. 

In the second case, we do not know whether the {\em ideal} is finitely
generated in this sense, but \cite{Derksen17} assures the existence of a
number $\ell=\ell(k)$ such that equations for $\overline{\im(\phi_U)}$
with $U$ of dimension $\ell$ pull back to equations that define
$\overline{\im(\phi_V)}$ for all $V$ {\em set-theoretically}.  No explicit
value of $\ell(k)$ is known, even for $k=2$.
\end{ex}

In many respects, the q-rank example is much more difficult than the
Waring rank example. The main reason for this is that the source $\AA$
is a polynomial functor of degree $2$, whereas $\AA$ has degree $1$ in
the first case. Nevertheless, Theorem~\ref{thm:Finiteness} implies, for
general polynomial functors $\AA,\AA'$ and morphisms $\phi$, the existence
of a $U$ such that in $\AA'(U)$ one sees enough
set-theoretic equations to define $\overline{\im(\phi_V)}$
for all $V$.

\subsection{Main result}

The main result of this paper makes
Theorem~\ref{thm:Finiteness} effective.

\begin{thm}[Main Theorem; see Theorem~\ref{thm:impl} for a
more precise statement]
There exists an algorithm $\impl$ that, on input polynomial functors
$\AA,\AA'$ and a morphism $\phi:\AA \to \AA'$, computes
a $U \in \Vec$ such that the equations for
$\overline{\im(\phi_U)}$
pull back to set-theoretic defining equations for 
$\overline{\im(\phi_V)}$ for all $V$.
\end{thm}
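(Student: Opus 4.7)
The plan is to construct $\impl$ by an enumerate-and-certify loop built on top of classical Buchberger elimination together with the companion algorithm $\parm$. Concretely, I would enumerate $U\in\Vec$ in order of increasing dimension; at each step I run classical Gröbner elimination inside $\AA'(U)$ to obtain generators for the vanishing ideal of $\overline{\im(\phi_U)}$, and let $X^{(U)}\subseteq\AA'$ be the closed subfunctor cut out by pulling these generators back along every linear map into $U$. The inclusion $\overline{\im(\phi)}\subseteq X^{(U)}$ is automatic from the naturality square in Section~\ref{ssec:Category}, and Theorem~\ref{thm:Finiteness} guarantees that equality eventually holds for some $U$, so the enumeration will succeed; the task is to decide when it has.

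For the stopping criterion I would certify the reverse inclusion $X^{(U)}\subseteq\overline{\im(\phi)}$. Feeding the defining equations of $X^{(U)}$ into $\parm$ produces a morphism $\psi\colon B\times\AA''\to\AA'$ whose image is $X^{(U)}$, and the question reduces to deciding whether $\psi$ factors set-theoretically through $\phi$. I would encode this as an implicitisation question on a suitable product of polynomial functors—e.g.\ by running $\impl$ on the combined morphism $(\phi,\psi)$ and comparing against the diagonal—arranging matters so that each recursive call is made on strictly simpler data, for instance a shift $\Sh(\AA)$ or a polynomial functor of strictly smaller degree. The auxiliary primitives $\linelm$, $\intersect$, $\smear$, and $\cert$ suggested by the paper's macros fit naturally into this reduction: $\smear$ generates the orbit of pulled-back equations under the $\mathrm{GL}$-action, $\linelm$ and $\intersect$ handle bookkeeping of linear parts and of subfunctor intersections, and $\cert$ packages the final equality test.

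Correctness then reduces to Theorem~\ref{thm:Finiteness} once termination is established, and termination comes from two sources: each call to classical Buchberger terminates inside the fixed finite-dimensional space $\AA'(U)$, and the mutual recursion between $\impl$ and $\parm$ is controlled by a Noetherian complexity measure on pairs of polynomial functors that strictly decreases at every recursive call. The main obstacle, as I see it, is precisely choosing this complexity measure: it must be a well-order on the polynomial-functor side simultaneously respected by shift, linear elimination, projection, and smearing, so that the co-recursion of $\impl$ and $\parm$ is guaranteed to bottom out without circular dependence. This is the same structural content as the existence proof of Theorem~\ref{thm:Finiteness}, made effective; extracting an algorithm (and, implicitly, a bound on $\dim U$) from that argument, rather than proving a new finiteness result, is where the bulk of the work will go.
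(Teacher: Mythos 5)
Your outer loop is the same as the paper's: enumerate $U = K^0, K^1, K^2, \ldots$, compute equations for $\overline{\im(\phi_U)}$ by Buchberger elimination, pull them back to a closed subset $X^{(U)} \supseteq \overline{\im(\phi)}$, call $\parm$ to get a parameterisation $\psi$ of $X^{(U)}$, and test whether $\im(\psi) \subseteq \overline{\im(\phi)}$. Corollary~\ref{cor:Finiteness} guarantees this eventually succeeds. Up to here you and the paper agree.

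The gap is in the stopping criterion. You propose to decide $\im(\psi) \subseteq \overline{\im(\phi)}$ by a recursive call to $\impl$ on a ``combined morphism $(\phi,\psi)$'' controlled by a well-founded complexity measure that strictly drops on each recursive call. There is no such reduction available: the inclusion question is a genuinely different problem from implicitisation, and no measure is offered (or, as far as the paper knows, exists) under which it bottoms out. In fact the paper's procedure $\cert$ is explicitly \emph{not} a decision procedure: it is a semi-decision procedure that terminates with ``true'' when the inclusion holds and does not terminate otherwise. That is why $\impl$ is specified to run on countably many parallel processors (dovetailing), starting a fresh $\cert$ call at each $n$ while previous ones may still be running; termination of $\impl$ comes from the fact that \emph{some} eventual $n$ yields a true inclusion, not from any $\cert$ call being guaranteed to halt.

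The actual content of $\cert$ also has a shape quite unlike what you sketch. Rather than reducing to $\impl$, the paper passes to the inverse-limit $\GL$-varieties $X_\infty$, uses Theorem~\ref{thm:Limit} (from \cite{Bik22}) to reduce the inclusion of image closures to the existence of a bounded Laurent-series curve $y(t)$ in the source with $\lim_{t\to 0}\alpha_\infty(y(t)) = x$ for a suitable generic point $x$, then uses systems of variables in Schur functors (Proposition~\ref{prop:Easier}) to force $y(t)$ into a finite-dimensional ansatz $\gamma(t)\in\Map(P',P)\otimes\widetilde{\Omega}[t,t^{-1}]$, and finally invokes the effective Greenberg approximation theorem to truncate to a finite-dimensional Gr\"obner feasibility test. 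None of this is recursion on polynomial functor complexity. The macros $\linelm$ and $\intersect$ you lean on are defined in the preamble but never used in the paper, so they do not supply the primitives your plan needs. As written, your argument has a circular dependence ($\impl$ built on a decision procedure built on $\impl$) with no termination proof, and the curve-search/Greenberg/dovetailing machinery that replaces it is precisely the new mathematical content of the paper.
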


This means, in particular, that the number $\ell(k)$ for the cubics of
q-rank at most any fixed $k$ can in principle be computed.

\subsection{Structure of the algorithm}
\label{ssec:Structure}

The structure of $\impl$ is as follows. For $U=K^0,K^1,K^2,\ldots$
one computes equations for $\overline{\im(\phi_U)}$ using Buchberger's
algorithm. By Theorem~\ref{thm:Finiteness} we know that at some point
these equations, via pull-back, define $\overline{\im(\phi_V)}$ for all
$V$. However, Theorem~\ref{thm:Finiteness} does not give a criterion
for when we may stop---this is different from the algorithm in the
$\FI$-setting: there, if one has not seen new equations arise between
the the finite set $\{1,\ldots,n\}$ and the finite set
$\{1,\ldots,2n\}$, one is guaranteed to have found a generating set of
equations \cite{Brouwer09e}.

To obtain a stopping criterion in the polynomial functor setting, we
derive an algorithm $\parm$ that, on input the polynomial equations
on $\AA'(U)$ found so far, computes a parameterisation $\psi$ of the
closed subset of $\AA'$ defined by those equations. If we can check that
$\im(\psi) \subseteq \overline{\im(\phi)}$, then we are done.

That such a parameterisation $\psi$ exists is guaranteed by the {\em
unirationality theorem} from \cite{Bik21}. The algorithm $\parm$ makes this
theorem effective. 
%Apart from the unirationality theorem,
%$\parm$ also relies on Gr\"obner basis techniques for another base 
%category in the sense of \S\ref{ssec:Category}, namely, the
%category $\OI$ of ordered finite sets with increasing maps.

Finally, to check that $\im(\psi)$ lies in the closure of $\im(\phi)$,
we pass to infinite dimensions and use a result from \cite{Bik22}
that says that this happens if and only if a suitably generic point
of $\im(\psi_\infty)$ can be reached as the limit of a curve in
$\im(\phi_\infty)$. In the current paper we show that this curve, which
lives in infinite-dimensional space, can be represented in finite terms,
and searched for, on a computer.

If such a curve does not exist, i.e., if $\im(\psi)$ is not contained
in $\overline{\im(\phi)}$, then this is because the current space
$U$ is too small. In this case, the search for a curve does not
terminate. So for our algorithm $\impl$ to 
terminate, it is imperative to run the search for witness curves in
parallel to the search for equations: in each step, $U$ is increased
with one dimension, new equations are computed, and a new curve search
is started. We model this behaviour by running the algorithm on countably
many parallel processors. Of course, standard results in the
theory of computation imply that this algorithm can then
also run on an ordinary Turing machine. 

\subsection{Outlook}

The algorithms $\impl$ and $\parm$ are very much theoretical algorithms,
and---apart from recent research on varieties over categories---they
rely on many powerful results from classical, finite-dimensional,
computational algebra: Buchberger's algorithm, of course, but also
algorithms for computing the radical of an ideal and for computing
the minimal primes containing a radical ideal.  Also, a fair amount of
representation theory of the general linear group goes into the algorithm.

At present, a general implementation of $\impl$ and $\parm$ seems entirely
out of reach. However, we believe that our computational approach
to implicitisation in polynomial functors will in the future serve as
a guide towards finding equations in concrete settings, e.g.~for the
variety of cubics of q-rank $\leq 2$ from
Example~\ref{ex:Cubics}.

\subsection{Organisation of the paper}

In Section~\ref{sec:Preliminaries}, we collect material that we will
need in the rest of the paper: some assumptions on the ground field,
computer representations of
finite-dimensional affine varieties, polynomial functors and their closed
subsets, and morphisms between these.

In Section~\ref{sec:Parameterisation}, we derive the algorithm $\parm$;
see Theorem~\ref{thm:parm}. In Section~\ref{sec:Implicitisation}
we derive the algorithm $\impl$ and prove our Main Theorem; see
Theorem~\ref{thm:impl} for a more precise statement.

Finally, $\impl$ depends on a procedure called $\cert$ that aims at certifying the
existence of a curve as discussed in \S\ref{ssec:Structure}. This procedure requires tools
from infinite-dimensional algebraic geometry, both from a forthcoming
paper \cite{Bik22} and from more classical sources. In particular, it is
Greenberg's approximation theorem
(Theorem~\ref{thm:Greenberg}) from \cite{Greenberg66} and
its effective version in \cite{Rond18} which
allows us to represent this curve in finite terms.

\subsection*{Acknowledgments}

The second author thanks Arthur Bik, Rob Eggermont, and Andrew Snowden
for numerous discussions on polynomial functors, especially on the topic
of systems of variables from \S\ref{ssec:Systems} and on formal curves
approximating a point in a closure, Theorem~\ref{thm:Limit}.

\section{Preliminaries}\label{sec:Preliminaries}

\subsection{The ground field}

Let $K$ be a field of characteristic zero in which we can do computations
on a computer. More precisely, we want $K$ to be a computable field,
and we further require that there exists an algorithm for factoring polynomials
in $K[X]$. The class of such fields is quite large; it includes $\QQ$
and its finitely generated extensions \cite[Appendix B]{GP08}.

\subsection{Finite-dimensional affine varieties}

A finite-dimensional affine variety over $K$ is represented by a finite
list of generators of a radical ideal in some polynomial ring over $K$
with finitely many variables. A morphism between affine varieties over
$K$ is represented by a finite list of polynomials.  Our algorithms will
intensively use existing algorithms for dealing with affine varieties.
Good general references are \cite{CLS15, CLS05}.  In particular, we will need
algorithms for computing the radical of an ideal; see e.g. \cite{KrLo91}. 

If $R$ is a $K$-algebra and $h$ is an element of $R$, then we write
$R[1/h]$ for the localisation. Similarly, if $B$ is a finite-dimensional
affine variety and $h$ an element of $K[B]$, then we write $B[1/h]$
for the basic open subset defined by $h$, i.e., the affine variety with
coordinate ring $K[B][1/h]$.

\subsection{Polynomial functors}

Let $\Vec$ be the category of finite-dimensional $K$-vector spaces. We
write $\Hom(U,V)$ and $\End(U)$ for the spaces of $K$-linear maps $U \to V$
and $U \to U$, respectively.

\begin{de}
A {\em polynomial functor of degree at most $d$} over $K$ is a covariant
functor $P:\Vec \to \Vec$ such that for any $U,V \in \Vec$ the
map $P:\Hom (U,V) \to \Hom_{P(U),P(V)}$ is polynomial of degree at
most $d$. Polynomial functors of degree at most $d$ form an abelian
category in which a homomorphism $P \to Q$ is a natural transformation,
i.e., given by a linear map $\psi_U:P(U) \to Q(U)$ for each $U \in \Vec$, 
such that for all $\phi \in \Hom(U,V)$ we have $Q(\phi) \circ \psi_U =
\psi_V \circ P(\phi)$.
\end{de}

When we say polynomial functor, we will always mean a polynomial functor
of degree at most some integer. {\em A priori}, a polynomial functor
seems to be given by an infinite amount of data. But up to isomorphism,
the following lemma due to Friedlander-Suslin \cite[Lemma 3.4]{Friedlander97} implies that it is given
by a finite amount of data only.

\begin{lm}[\cite{Friedlander97}] \label{lm:FS}
The map from polynomial functors of degree at most $d$ to
$\GL_d$-representations that assigns to $P$ the vector space $P(K^d)$
with the algebraic group homomorphism  $\GL_d \to \GL(P(K^d)), \phi
\mapsto P(\phi)$ is an equivalence of abelian categories from
polynomial functors of degree at most $d$ to polynomial
$\GL_d$-re\-pre\-sen\-tations of degree at most $d$. 
\end{lm}

Here, a homomorphism $\GL_n \to \GL(V)$ of algebraic groups is called
polynomial of degree at most $d$ if it extends to a polynomial map
$\End(K^n) \to \End(V)$ of degree at most $d$.

The Friedlander-Suslin lemma is relevant to us for two reasons. First,
since polynomial representations of $\GL_d$ are completely reducible and
the irreducible ones are completely classified by combinatorial data,
the same holds for polynomial functors. The consequence is that each
polynomial functor of degree at most $d$ is a direct sum of {\em Schur
functors}: functors of the form $S_\lambda: V \mapsto \Hom(U_\lambda,V^{\otimes
e})$, where $e \leq d$, $\lambda$ is a partition of $e$, and $U_\lambda$
is the corresponding irreducible representation of the symmetric group
on $e$ letters. So we may represent a polynomial functor by a finite
tuple of partitions. Second, the proof of the Friedlander-Suslin lemma
is completely constructive: it can be transformed into 
algorithms that compute, from a polynomial representation $\rho$ of $\GL_d$ of degree
$\leq d$ corresponding to a polynomial functor $P$:
\begin{itemize}
\item (a basis for) $P(V)$ on input $V$; 
\item (a matrix for) $P(\phi):P(U) \to P(V)$ on input a linear map 
$\phi:U \to V$; and 
\item (a matrix for) $\psi_V:P(V) \to Q(V)$ on input a second polynomial 
$\GL_d$-representation of degree $\leq d$ representing the polynomial
functor $Q$, as well as homomorphism of
$\GL_d$-representations. 
\end{itemize}
We will not make these algorithms explicit here.

Each polynomial functor $P$ of degree at most $d$ is a direct sum of {\em
homogeneous} polynomial functors: $P=P_0 \oplus \cdots \oplus P_d$, where
\[ P_e(V):=\{p \in P(V) \mid P(t \id_V)p=t^e p\}.\] 
In particular, $P_0$
is a degree-$0$ polynomial functor, which assigns a fixed vector space,
also denoted $P_0$, to every $V$ and the identity $\id_{P_0}$ to every
linear map $\phi \in \Hom(U,V)$. We call $P$ {\em pure} if $P_0 = 0$,
and we call $P_1 \oplus \cdots \oplus P_d$ the {\em pure part} of $P$.

\subsection{Closed subsets of polynomial functors}

\begin{de}
A {\em closed subset} $X$ of a polynomial functor $P$ over $K$ is the data
of a closed subvariety $X(V)$ of $\ol{K} \otimes P(V)$ defined over $K$
such that for each $\phi \in \Hom(U,V)$ the linear map $1 \otimes P(\phi)$
maps $X(U)$ into $X(V)$.
\end{de}

We take points with coordinates in $\ol{K}$ because $K$ might not be
large enough to see all points. On the other hand, our algorithm will
always work over $K$ itself, and all varieties will be defined over
$K$. In fact, we shall usually drop the $\ol{K}$ from the notation and
just write {\em closed subvariety $X(V)$ of $P(V)$} and $P(\phi):X(U)
\to X(V)$ in the above setting. This is not different from the classical
setting of implicitisation, where one computes the equations
of the Zariski closure in $\ol{K}^n$ of the image of a polynomial map
$\ol{K}^m \to \ol{K}^n$ which is defined over $K$.

For a closed subset $X$ of a polynomial functor $P$ and a linear map $\phi \in
\Hom(U,V)$, we will write $X(\phi):X(U) \to X(V)$ for the restriction
of $P(\phi)$ to $X(U)$. Note that, in particular, the map $\GL(V) \times
X(V) \to X(V), (g,x) \mapsto X(g)(x)$ defines an algebraic action of $\GL(V)$
on $X(V)$, for each $V \in \Vec$. So this paper is concerned with 
(typically) highly symmetric varieties, related by linear maps coming
from linear maps between distinct vector spaces. 

If $X$ is a closed subset of $P$, then the variety $B:=X(0)$ is a closed
subvariety of $P(0)=P_0$. For each $V \in \Vec$, the zero map $0_{V,0}:
V \to 0$ maps $X(V)$ to $X(0)$, and indeed {\em onto} $X(0)$, because
for any $p_0 \in X(0) \subseteq P(0)$ we have
\[ p_0=P(0_{V,0} \circ 0_{0,V})p_0=P(0_{V,0})(P(0_{0,V})p_0) \in
P(0_{V,0})(X(V)) \]
where we have used that $P$ is a functor and $X$ is a closed subset of
$P$. Hence for each $V$, $X(V)$ is a closed subset of $B \times P'(V)$,
where $P'$ is the pure part of $P$, such that $X(V)$ maps surjectively
to $B$. We will also say that {\em $X$ is a closed subset of
$B \times P'$}.

The following theorem with its corollary implies that a closed subset $X$ 
of a polynomial functor $P$ can be represented on a computer.

\begin{thm}[\cite{Draisma17}] \label{thm:Finiteness}
Let $P$ be a polynomial functor and let $X_1 \supseteq X_2 \supseteq \ldots$ be a descending chain of closed subsets of $P$. 
Then this chain stabilizes, i.e., there exists $n_0$ such that $X_{n_0} = X_{n_0 + 1} 
= \ldots$
\end{thm}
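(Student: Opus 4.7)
The plan is noetherian induction on the polynomial functor $P$, ordered so that $P < Q$ iff $(\deg P, \text{multiplicities of the top-degree Schur factors}, \ldots)$ is lexicographically smaller than the corresponding tuple for $Q$. Since polynomial functors of bounded degree are built from finitely many Schur functors, this is a well-order on isomorphism classes. The base case $\deg P = 0$ is immediate: $P(V) \equiv P_0$ is a fixed finite-dimensional vector space, closed subsets of $P$ are simply closed subvarieties of $P_0$, and Hilbert's basis theorem gives the result.

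For the inductive step, fix a descending chain $X_1 \supseteq X_2 \supseteq \cdots$ in $P$, and suppose toward a contradiction that it does not stabilise. The central tool I would use is the \emph{shift} functor $\Sh P$, defined by $(\Sh P)(W) := P(K \oplus W)$, functorial in $W$. A direct computation via Lemma~\ref{lm:FS}---writing $P(K \oplus W)$ as a polynomial $\GL$-representation and extracting the part polynomial in $\End(W)$---produces a decomposition $\Sh P \cong P \oplus R$, where the top-degree part of $R$ has strictly smaller Schur multiplicities than that of $P$, since absorbing the $K$-summand into a top-degree tensor slot strictly lowers the degree in $W$. Hence $R < P$ in the chosen well-order, and the chain $(X_n)$ pulls back along $K \oplus W \to W$ to a descending chain in $\Sh P$ that respects this summand decomposition.

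The crux is to ``pivot'' the chain around a generic point. Let $X_\infty := \bigcap_n X_n$, choose $V$ large, and pick $x \in X_\infty(V)$ in sufficiently general position. After choosing a line $L \subseteq V$ along which $x$ is nondegenerate and writing $V = L \oplus V'$, I would use the identification $\Sh P \cong P \oplus R$ to realise an equivariant neighbourhood of the $\GL(V)$-orbit of $x$ in terms of data in the strictly smaller polynomial functor $R$. The descending chain transports to a descending chain of closed subsets of $R$ on this neighbourhood, and by the inductive hypothesis that chain stabilises. Hence the original chain stabilises on a $\GL$-invariant open subset; the complement is a proper closed subfunctor of $P$, to which a noetherian recursion (applied inside a closed subfunctor that is itself strictly smaller) applies to finish the proof.

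The main obstacle is the pivot-and-slice step: making precise how a sufficiently generic point $x$, together with the decomposition $\Sh P \cong P \oplus R$, reduces the local structure of the descending chain to data in the strictly smaller polynomial functor $R$, and verifying that the reduction respects closedness and the chain. This combines the Friedlander--Suslin dictionary with a $\GL_\infty$-equivariant local analysis of $P(V)$ for varying $V$, and constitutes the technical heart of the argument; the well-foundedness of the induction hinges on the bookkeeping that the top Schur multiplicities of $R$ are genuinely smaller than those of $P$.
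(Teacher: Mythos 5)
This theorem is not proved in the present paper; it is imported from \cite{Draisma17}, so the comparison is against the argument there. Your skeleton is right: the proof is indeed a Noetherian induction on the well-founded order on polynomial functors of \S\ref{ssec:Order}, the base case is Hilbert's basis theorem, and the shift functor is the central tool. But the two steps you flag as ``the technical heart'' are exactly where your description diverges from what actually works.

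First, although the decomposition $\Sh_K P \cong P \oplus R$ with $R$ strictly smaller is correct, the inductive step in \cite{Draisma17} does not proceed by ``pivoting around a generic point along a line $L\subseteq V$.'' What it actually does is pick an irreducible summand $R$ of the top-degree part of $Q$ (where $P = B\times Q$, $Q$ pure), take a defining equation $f$ of the closed subset that genuinely involves the $R$-coordinates, and differentiate: $h := \partial f/\partial x_j$. Then, after shifting by a fixed $U$ (not by a line inside a varying $V$), the key lemma (the paper cites it as \cite[Lemma 25]{Draisma17}; it is used verbatim in step~\eqref{it:linelm} of $\parm$) says that the projection $(\Sh_U X)[1/h] \to B'' \times Q''$, with $Q''$ the pure part of $\Sh_U Q/R$, is a \emph{closed embedding} onto a closed subset of a strictly smaller polynomial functor. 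It is $Q''$ that is smaller, not $R$; ``a descending chain of closed subsets of $R$'' is not what arises, and the localisation at $h$ is essential---without it the embedding fails.

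Second, the final sentence of your inductive step contains a genuine gap: the complement $\{h=0\}$ is a proper closed \emph{subset} of $P$, not a smaller polynomial \emph{functor}, so the induction hypothesis on the well-founded order does not apply to it. Treating it as such would make the argument circular, since Noetherianity of arbitrary closed subsets is exactly the statement being proved. In \cite{Draisma17} the closed complement is handled by a secondary induction: $h$ lives on the finite-dimensional base $B'' = B\times Q(U)$, so $\{h=0\}$ shrinks the base, and one invokes Noetherianity of the finite-dimensional variety $B''$, not the polynomial-functor order. Without identifying the equation $f$, the derivative $h$, the localisation, and the embedding lemma, the ``pivot-and-slice'' step cannot be completed, and without the distinction between recursion on the functor and recursion on the finite-dimensional base, the treatment of the complement is unjustified.
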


\begin{cor} \label{cor:Finiteness}
For each closed subset $X \subseteq P$ there exists a vector space $U
\in \Vec$ with the property that for all $V \in \Vec$ we have
\[ X(V)=\bigcap_{\phi \in \Hom(V,U)} P(\phi)^{-1}(X(U)). \]
\end{cor}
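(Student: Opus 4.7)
The plan is to produce, for each $U \in \Vec$, a candidate closed subset $Y_U$ of $P$ that sandwiches $X$ between it and itself on small vector spaces, and then invoke Theorem~\ref{thm:Finiteness}. Concretely, for $U \in \Vec$ define
\[ Y_U(V) := \bigcap_{\phi \in \Hom(V,U)} P(\phi)^{-1}(X(U)) \subseteq P(V). \]
First I would verify that $Y_U$ is genuinely a closed subset of $P$: each $Y_U(V)$ is closed (intersection of preimages of closed sets under linear maps), and for any $\alpha \in \Hom(V,W)$ and $p \in Y_U(V)$, every $\phi \in \Hom(W,U)$ satisfies $P(\phi)(P(\alpha)p) = P(\phi \circ \alpha)(p) \in X(U)$ since $\phi \circ \alpha \in \Hom(V,U)$. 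The inclusion $X \subseteq Y_U$ is immediate from $X$ being closed under the maps $P(\phi)$.

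The heart of the argument is the retraction trick. Suppose $\iota:U \hookrightarrow U'$ is an injection and let $\pi:U' \to U$ be any left inverse. Then for $p \in Y_{U'}(V)$ and $\phi \in \Hom(V,U)$ we have $P(\iota \circ \phi)(p) \in X(U')$, and applying $P(\pi)$---which maps $X(U')$ into $X(U)$ because $X$ is a closed subset---gives $P(\phi)(p) = P(\pi \circ \iota \circ \phi)(p) \in X(U)$. Hence $Y_{U'}(V) \subseteq Y_U(V)$ whenever $U$ embeds into $U'$. The same retraction idea, applied with $V$ in place of $U$ and $U$ in place of $U'$, shows that if $V$ embeds into $U$ then $Y_U(V) = X(V)$: for $p \in Y_U(V)$ and any embedding $\iota:V \hookrightarrow U$ with retraction $\pi:U \to V$, one has $P(\iota)(p) \in X(U)$, hence $p = P(\pi \circ \iota)(p) = P(\pi)(P(\iota)(p)) \in X(V)$.

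Now consider the descending chain $Y_{K^0} \supseteq Y_{K^1} \supseteq Y_{K^2} \supseteq \ldots$ of closed subsets of $P$, which is descending by the previous paragraph. By Theorem~\ref{thm:Finiteness}, there exists $n_0$ with $Y_{K^n} = Y_{K^{n_0}}$ for all $n \geq n_0$. I claim $U := K^{n_0}$ works. Given any $V \in \Vec$, choose $n \geq \max(n_0,\dim V)$; then $V$ embeds into $K^n$, so by the second retraction argument $Y_{K^n}(V) = X(V)$, and by stabilisation this equals $Y_{K^{n_0}}(V) = Y_U(V)$. Combined with $X \subseteq Y_U$, this yields $X(V) = Y_U(V)$ for all $V$.

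No serious obstacle arises: the corollary is essentially a formal consequence of Theorem~\ref{thm:Finiteness} once one identifies $Y_U$ as the natural object to test against. The only point that needs care is verifying monotonicity of the chain $U \mapsto Y_U$ in the right direction, which is what forces the retraction/splitting argument and uses functoriality of both $P$ and $X$ in an essential way.
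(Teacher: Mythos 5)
Your proof is correct and follows essentially the same route as the paper: you define the auxiliary closed subsets $Y_U$ (the paper calls them $X_n$ for $U=K^n$), observe that they form a descending chain, invoke Theorem~\ref{thm:Finiteness} to stabilise, and use a retraction argument to identify $Y_U(V)$ with $X(V)$ for $V$ that embeds into $U$. The paper's write-up is terser (it leaves the descending-chain and stabilisation checks to the reader, exactly the ``retraction trick'' you spell out), but the underlying argument is identical.
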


\begin{proof}
For every $n\in \NN$ consider the closed subset $X_n$ in $P$
defined by 
\[ X_n(V):=\bigcap_{\phi \in \Hom(V,K^n)} P(\phi)^{-1}(X(K^n)). \]
Using that $P$ is a polynomial functor and $X$ is a closed subset, we can see that
for all $n$, $X_n(K^n) = X(K^n)$ and that $X_1 \supseteq X_2 \supseteq \ldots$ is
a descending chain. By Theorem~\ref{thm:Finiteness} this chain stabilizes at, say,
$X_{n_0}$, and hence for $n\geq n_0$, $X(K^n) = X_n(K^n) = X_{n_0}(K^n)$.
Using that $X$ is a closed subset, we get the same for $n<n_0$. Hence, 
$X=X_{n_0}$, and the corollary holds for $U=K^{n_0}$.
\end{proof}

Consequently, if $f_1,\ldots,f_k \in K[P(U)]$ are defining
equations for $X(U)$, then the pull-backs of the $f_i$ along all
linear maps $P(\phi):P(V) \to P(U)$ cut out $X(V)$. We then write
$X=\cV_P(f_1,\ldots,f_k)$. The tuple consisting of $P$, $U$, and
$(f_1,\ldots,f_k)$ together form a computer representation of
$X$. Slightly more generally, we will often represent $X$ as follows. 

\begin{de}
Let $B$ be a finite-dimensional affine variety, $P$ a pure polynomial
functor, $U$ a finite-dimensional vector space, and 
$f_1,\ldots,f_k \in K[B \times P(U)]$. The tuple
$(B,P,U,f_1,\ldots,f_k)$ is called the {\em implicit representation}
of the closed subset $X$ of $B \times P$ defined by 
\[ X(V)=\{(b,p') \mid \forall \phi \in \Hom(V,U) \ \forall
i=1,\ldots,r: f_i(b,P(\phi)p')=0\}.\qedhere \]
\end{de}

Again by Corollary~\ref{cor:Finiteness}, every closed subset of $B \times
P$ admits an implicit representation. Here we allow that some of the $f_i$
are nonzero elements of $K[B]$, so that $X$ does not map surjectively
to $B$ but rather onto a closed subvariety of $B$.

\subsection{Irreducibility}

\begin{de}
Let $B$ be a finite-dimensional affine variety, $Q$ a pure polynomial
functor, and $X$ a closed subset of $B \times Q$.  Then $X$ is called
irreducible if $X \neq \emptyset$ and whenever $X=X_1 \cup X_2$ where
$X_1,X_2$ are closed subsets of $B \times Q$, we have $X=X_1$ or $X=X_2$.
\end{de}

A straightforward check shows that $X$ is irreducible if and only if
$X(V)$ is irreducible for each $V \in \Vec$; in particular, $B
\times Q$ is irreducible if and only if $B$ is irreducible. Note that
since $K$ may not be algebraically closed, irreducibility in our sense
may not imply irreducibility over $\ol{K}$.

\subsection{Gradings and ideals} \label{ssec:Gradings}

For any polynomial functor $P=P_0 \oplus \cdots \oplus P_d$, and any
$V \in \Vec$, the coordinate ring $K[P(V)] \cong \bigotimes_{e=0}^d
K[P_e(V)]$ is a graded polynomial ring in which the coordinates on $K[P_e(V)]$
are given the degree $e$. A polynomial $f$ is homogeneous of degree $n$
with respect to this grading if and only if $f(P(t \cdot \id_V) p)=t^n f(p)$
for all $p \in P(V)$. If $X$ is a closed subset of $P$, then $X(V)$ is
preserved under $P(t \cdot \id_V)$ for all $t$, and hence the ideal of $X(V)$
is homogeneous.

Similarly, for $B$ a finite-dimensional affine variety and $P$ a pure
polynomial functor, $K[B \times P(V)]$ has a standard grading in which
the elements of $K[B]$ have degree $0$, and the ideal of any closed
subset $X$ of $B \times P$ is homogeneous.

We find that the coordinate rings $K[X(V)]$, for $X$ a closed subset
of a polynomial functor $P$ of degree at most $d$ (or for $X$ a closed
subset of $B \times P$ with $P$ a pure polynomial functor of degree
$d$), have standard gradings and are generated in degree at most $d$. A
straightforward computation shows that for each $\phi \in \Hom(U,V)$,
the pullback $X(\phi)^*:K[X(V)] \to K[X(U)]$ is a graded $K$-algebra
homomorphism.

\subsection{Morphisms}

\begin{de}
Let $X,Y$ be closed subsets of polynomial functors. Then a morphism
$\alpha:X \to Y$ is given by a morphism $\alpha_V:X(V) \to Y(V)$ of
affine varieties over $K$ for each
$V \in \Vec$ such that for all $\phi \in \Hom(U,V)$ we have $Y(\phi)
\circ \alpha_U=\alpha_V \circ X(\phi)$.
\end{de}

By taking for $\phi$ scalar multiples of the identity, one finds that
the pull-back $\alpha_V^*$ is a graded $K$-algebra homomorphism.

Lemma \ref{lm:Unique} ensures that we can represent morphisms on a
computer. First, the following generalises a well-known property of
finite-dimensional affine varieties (see \cite[Proposition 1.3.22]{Bik2020}).

\begin{lm} \label{lm:Extends}
If $X$ is a closed subset of a polynomial functor $P$ and $Y$ is a
closed subset of a polynomial functor $Q$, then any morphism $X \to Y$
extends to a morphism $P \to Q$.
\end{lm}

\begin{lm} \label{lm:Unique}
Let $X,Y$ be closed subsets of polynomial functors of degree at
most $d$. Then a morphism $\alpha:X \to Y$ is uniquely determined
by $\alpha_{K^d}:X(K^d) \to Y(K^d)$, and this unique determination is
algorithmic in the sense that if $\alpha_{K^d}$ is known, then $\alpha_V$
can be computed for any $V \in \Vec$.
\end{lm}

\begin{proof}
By Lemma~\ref{lm:Extends}, $\alpha$ extends to a morphism $\beta:P
\to Q$, where $X$,$Y$ are closed subsets in the degree-$\leq d$
polynomial functors $P,Q$. Now for each $e=0,\ldots,d$, the restriction
of $\beta_V^*:Q_e(V)^* \to K[P(V)]_{e}$ defines, as $V$ varies,
a homomorphism from the polynomial functor $V^* \mapsto Q_e(V)^*$
to the polynomial functor $V^* \mapsto K[P(V)]_e$, both of degree $e$
(except that $K[P(V)]_e$ is not finite-dimensional if $P_0 \neq 0$, but
we may replace it by its image). Then we apply the Friedlander-Suslin
lemma to conclude that this homomorphism is uniquely determined by its
evaluation at $V=K^d$. The algorithmicity follows from the algorithmicity
of the Friedlander-Suslin lemma.
\end{proof}

\begin{re} \label{re:Map}
A special instance of the lemma is when $X=P$ and $Y=Q$, with 
$P,Q$ pure polynomial functors of degree $\leq d$. In this case,
the space of morphisms $\Map(P,Q)$ is a finite-dimensional vector space
over $K$, namely, the direct sum for $e=1,\ldots,d$ of the space of
$\GL_d$-equivariant linear maps $Q_e(V)^* \to K[P(V)]_e$,
where $V=K^d$. This space will be important to us towards the end of
the paper.
\end{re}

\begin{re} \label{re:Split}
We will often consider morphisms $\alpha: A \times P \to B \times Q$,
where $A,B$ are finite-dimensional varieties over $K$ and $P,Q$ are
pure polynomial functors. Such a morphism decomposes into a morphism
$\alpha^{(0)}:A \to B$ and a morphism $\alpha^{(1)}:A \times P \to Q$;
here we use that $\alpha_V^*$ preserves the degree and that the coordinates on 
$B$ have degree zero (see \S\ref{ssec:Gradings}), and hence
their images cannot involve the positive-degree coordinates on $P$.  If $A$ is
irreducible, then $\alpha^{(1)}$ can be thought of as a $K(A)$-valued point of the
finite-dimensional affine space $\Map(P,Q)$; this will be a useful point
of view later.
\end{re}

\subsection{Shifting}

\begin{de}
Any fixed $U \in \Vec$ defines a covariant polynomial functor $\Sh_U:\Vec
\to \Vec$ of degree $1$ by $\Sh_U(V)=U\oplus V$ and, for $\phi:V \to
W$, $\Sh_U(\phi)=\id_U \oplus \phi$. If $P$ is a polynomial functor of
degree $d$, then $\Sh_U P:=P \circ \Sh_U$ is also a polynomial functor
of degree $d$, called the {\em shift over $U$} of $P$.
\end{de}
The top-degree parts of $\Sh_U P$ and $P$ are canonically
isomorphic \cite[Lemma 14]{Draisma17}. If $X$ is a closed subset of $P$, then $\Sh_U X:=X \circ \Sh_U$ is a closed
subset of $\Sh_U P$. Note that $(\Sh_U X)(0)=X(U)$, so shifting has
the effect of making the finite-dimensional base variety larger. More
precisely, if $X$ is a closed subset of $B \times P$ with $P$ a pure
polynomial functor, then $\Sh_U X$ is a closed subset of $(B \times P(U))
\times P'$ where $P'$ is the pure part of $\Sh_U P$.

\subsection{An order on polynomial functors} \label{ssec:Order}

\begin{de}
We call a polynomial functor $Q$ smaller than a polynomial functor $P$
if the two are not isomorphic and for the largest $e$ such that $Q_e$
is not isomorphic to $P_e$, the former is a quotient of the latter.
\end{de}

Using the Friedlander-Suslin lemma, one can show that this is a
well-founded order on polynomial functors \cite[Lemma 12]{Draisma17}.

\subsection{Summary}
We have now indicated computer representations for all the mathematical
objects that we will need below: finite-dimensional affine varieties,
polynomial functors, closed subsets of the latter and
morphisms between these. 
Furthermore, we have introduced two tools in the design
and analysis of our algorithms: shifting and a well-founded
order on polynomial functors.  

\section{Parameterisation}\label{sec:Parameterisation}

\subsection{The result}

The goal of this section is to prove the following theorem.

\begin{thm} \label{thm:parm}
There exists an algorithm $\parm$ that, on input a finite-dimensional
affine variety $B$, a pure polynomial functor $Q$, a finite-dimensional
vector space $U$ over $K$, and elements $f_i \in K[B \times Q(U)]$ for
$i=1,\ldots,k$, computes $(A;P;\beta)$ where $A$ is a finite-dimensional
affine variety, $P$ is a pure polynomial functor, and $\beta$ is a
morphism $A \times P \to B \times Q$, defined over $K$, such that for
each finite-dimensional $K$-vector space $V$ we have
\begin{align*} &\beta(A(\ol{K}) \times \ol{K} \otimes P(V))
\\
& = \{ (b,q) \in B(\ol{K}) \times \ol{K} \otimes Q(V) \mid
\forall i\  \forall \phi \in
\Hom_K(V,U):\ f_i(b, 1 \otimes Q(\phi)q) =0 \}.
\end{align*}
\end{thm}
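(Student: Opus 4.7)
I would approach this by making effective the proof of the unirationality theorem of \cite{Bik21}: that proof already gives a recursive construction of a parameterisation, so the task is to show that each step of the recursion can be carried out algorithmically. The overarching recursion is a Noetherian induction on the pure polynomial functor $Q$ using the well-founded order of \S\ref{ssec:Order}.

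Reduction to irreducible pieces. Using standard algorithms (see \cite{KrLo91}), compute the minimal primes of $(f_1,\ldots,f_k) \subseteq K[B \times Q(U)]$, and reinterpret each (after saturating by the $\GL$-action using the Friedlander--Suslin correspondence, Lemma~\ref{lm:FS}) as the ideal defining an irreducible closed subset $X_i \subseteq B \times Q$. Since parameterisations can be amalgamated by taking disjoint union of bases ($A := \bigsqcup_i A_i$), direct sum of polynomial functors ($P := \bigoplus_i P_i$), and extending each $\beta_i$ by zero on the other summands, it suffices to parameterise each irreducible $X_i$ separately.

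Inductive step. For irreducible $X$ with $Q = 0$, $X$ is a closed subvariety of $B$ and $\beta$ is its inclusion. Otherwise, shift $X$ over $K$ to obtain $\Sh_K X$, viewed as a closed subset of $B' \times Q'$ with $B' := B \times Q(K)$ and $Q' := (\Sh_K Q)_{\mathrm{pure}}$. The key structural input from \cite{Draisma17, Bik21} is that on some dense open $B'[1/h] \subseteq B'$ there is a closed embedding $(\Sh_K X) \cap (B'[1/h] \times Q') \hookrightarrow B'[1/h] \times Q''$ for a polynomial functor $Q''$ strictly smaller than the original $Q$ in the well-founded order. Algorithmically, extract from the ideal of $\Sh_K X$ an equation that is affine-linear in a top-degree coordinate of $Q'$ with nonzero leading coefficient $h \in K[B']$, and use it to solve for, and then eliminate, that coordinate. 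On $B'[1/h]$ the inductive hypothesis then applies (because $Q'' < Q$), yielding a parameterisation. On the complement $V(h) \subseteq B'$, iterate the same shift-and-localise procedure, finding $h_2$ on $V(h)$, then $h_3$ on $V(h,h_2)$, and so on; this terminates by Noetherianity of $B'$, and each of the finitely many resulting pieces is parameterisable by the inductive hypothesis. Finally, turn a parameterisation $\beta':A' \times P' \to \Sh_K X$ into one of $X$ by composing with the natural transformation $\Sh_K X \to X$ induced by the projections $\pi_V: K \oplus V \twoheadrightarrow V$: this is surjective onto $X(V)$ because $X(V) \hookrightarrow X(K \oplus V)$ is a section of $X(\pi_V)$.

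The main obstacle is making the shift-and-localise step fully effective: locating an equation in the ideal of $\Sh_K X$ whose top-degree coordinate can be eliminated, extracting its leading coefficient $h \in K[B']$, and constructing the embedding $Q'' \hookrightarrow Q'$ explicitly as a morphism of polynomial functors over $K[B'][1/h]$. All of this must respect $\GL_d$-equivariance, which constrains the leading coefficients to lie in canonically defined invariant subspaces, and is made possible only through the constructive Friedlander--Suslin correspondence (Lemma~\ref{lm:FS}). A secondary challenge is the combinatorial bookkeeping required to merge outputs across the irreducible-component split, the iterated localisations on $B'$, and the unshift, into a single morphism of the exact form demanded by the theorem statement.
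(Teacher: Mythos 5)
Your overall strategy --- Noetherian induction on $Q$ in the well-founded order of \S\ref{ssec:Order}, handling the base case $Q=0$ by a radical ideal computation, and in the inductive step shifting, localising at a function $h$, eliminating a top-degree summand via \cite[Lemma~25]{Draisma17}, and composing back with the projection $\Sh\,X \to X$ --- is exactly the skeleton of the paper's algorithm $\parm$. But several of the steps you sketch are either vague where the paper is concrete, or differ in ways that create genuine gaps.

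First, the initial reduction to irreducible components does not appear in the paper and is, as stated, problematic. Computing the minimal primes of $(f_1,\ldots,f_k)\subseteq K[B\times Q(U)]$ gives ideals at level $U$ only; these need not be $\GL$-stable, and ``saturating by the $\GL$-action using Friedlander--Suslin'' is not a well-defined algorithmic step --- the decomposition of the closed subset $X\subseteq B\times Q$ into irreducible closed subsets of the \emph{functor} is not the same as the decomposition of $X(U)$ into components, and may require passing to a larger $U$. The paper avoids this entirely: $\parm$ accepts a possibly reducible input and returns a disjoint-union parameterisation without ever decomposing into irreducibles. (Irreducibility is only imposed later, inside $\cert$.)

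Second, your prescription for finding $h$ --- ``extract an equation that is affine-linear in a top-degree coordinate with nonzero leading coefficient'' --- begs the question of how to produce such an equation. The paper instead reduces the $f_i$ modulo the elimination ideal of the $R(U)$-coordinates, picks any surviving $f_i$ and any top-degree variable $x_j$ appearing in it, and sets $h:=\partial f_i/\partial x_j$. This is a concrete choice that is automatically nonzero on $X'(U)$ and for which \cite[Lemma~25]{Draisma17} applies; it is the algorithmic content you are missing. Relatedly, you shift over $K$, whereas the paper shifts over $U$: this is not cosmetic, because $h$ lives in $K[B\times Q(U)]$, and shifting over $U$ is what makes $h$ a function on the new base $B''=(B\times Q(U))[1/h]$. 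Shifting over $K$ gives a base $B\times Q(K)$ on which $h$ need not be defined.

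Third, the termination argument is subtler than ``Noetherianity of $B'$.'' In the paper there are \emph{three} recursive calls: two in which $Q$ strictly decreases (the $X=X'\times R$ case and the shifted-and-embedded case), and one in which $Q$ stays the same but $h$ is adjoined to the equations. Termination of the latter loop needs a two-layer argument: either the variety $X'(U)$ shrinks (which, by Noetherianity of $B\times Q'(U)$, can happen only finitely often), or it stays the same and then the degree in the top-degree variables of the first nonzero $f_i$ strictly drops. Your ``iterate on $V(h),V(h,h_2),\ldots$'' captures only the first of these two mechanisms and does not obviously terminate without the degree-drop observation. Finally, you should also record the two lemmas the paper proves to justify correctness: one showing that when all reduced $f_i$ vanish we really have $X=X'\times R$ \emph{and} that $X'$ is still cut out by its equations at level $U$ (Lemma~\ref{lm:Projection1}), and one showing that the closed embedding $(\Sh_U X)[1/h]\hookrightarrow B''\times Q''$ is still defined by equations at level $U$ (Lemma~\ref{lm:Projection2}, which requires a Reynolds-operator argument to produce a $\GL_d$-equivariant inverse). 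Without these, the recursive calls are not on inputs of the correct form.
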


Here, $Z(\ol{K})$ means the $\ol{K}$-points of a finite-dimensional affine
variety $Z$ defined over $K$, i.e., the set of $K$-algebra homomorphisms
$K[Z] \to \ol{K}$. If $Z$ is just a vector space over $K$, then this
means $\ol{K} \otimes Z$.  In what follows, to simplify notation, we
will supress $\ol{K}$ and write $(b,q) \in B \times Q(V)$ when we really
mean $(b,q) \in B(\ol{K}) \times Q(V)(\ol{K})$, and for a $\phi \in
\Hom_K(V,W)$ we will write $(b,Q(\phi)q) \in B \times Q(W)$ instead of
$(b,(1 \otimes Q(\phi))q) \in B(\ol{K}) \times Q(W)(\ol{K})$. Similarly,
we will write $X(V)$ even when we mean $X(V)(\ol{K})$.

We write $X:=\cV_{B \times Q}(f_1,\ldots,f_k)$, so that $X(V)$
is the set on the right-hand side above. This is the closed subset of
$B \times Q$ that we want to parameterise.

\begin{re}
We allow $A$ to be reducible, even when $B$ is irreducible. The fact
that $\alpha:A \times P \to B \times Q$ as in the theorem exists is
\cite[Theorem 4.2.5]{Bik2020}; see also \cite[Proposition 5.6]{Bik21}.  If $B$ is indeed irreducible, then for
some irreducible component $A'$ of $A$ the restriction of $\alpha$ to $A'
\times P$ is dominant into $\cV_{B \times Q}(f_1,\ldots,f_k)$.
\end{re}

\subsection{Smearing out equations}

Before proving the theorem, we introduce an algorithm that
computes the equations for any single instance $X(V)$.

\begin{prop}
There exists an algorithm $\smear$, that, on the same input as
$\parm$ plus a finite-dimensional vector space $V$, outputs generators of
the radical ideal of $X(V) \subseteq K[B \times Q(V)]$. 
\end{prop}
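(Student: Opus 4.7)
The plan is to view each pullback $f_i(b,Q(\phi)q)$ as a polynomial in the matrix entries of $\phi$ and to extract its coefficients. Choose bases of $V$ and $U$ and let $y=(y_{i j})_{1\le i\le \dim U,\ 1\le j\le \dim V}$ be indeterminates representing the entries of a generic linear map $V\to U$. By the algorithmic form of the Friedlander--Suslin lemma (see the three bullet points after Lemma~\ref{lm:FS}), we can compute the matrix of $Q(\phi)\colon Q(V)\to Q(U)$ explicitly as a matrix whose entries are polynomials in $y$. Applying this matrix to a generic element $q\in Q(V)$ (whose coordinates are the coordinate functions on $Q(V)$) and then substituting into $f_i$ yields, for each $i$, a polynomial
\[
F_i \;\in\; K[B\times Q(V)]\,[y],
\]
which can be expanded and its coefficients in $y$ read off as elements of $K[B\times Q(V)]$.

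Let $S\subset K[B\times Q(V)]$ be the finite union of these coefficients over $i=1,\dots,k$. A point $(b,q)\in B\times Q(V)$ lies in $X(V)$ iff $f_i(b,Q(\phi)q)=0$ for every $i$ and every $\phi\in\Hom(V,U)$, i.e., iff for each $i$ the polynomial $F_i(b,q,y)$ in $y$ (with $(b,q)$ specialized) vanishes identically. Since $\ol K$ is infinite, this is equivalent to the simultaneous vanishing of all elements of $S$. Therefore the ideal $I\subset K[B\times Q(V)]$ generated by $S$ cuts out $X(V)$ set-theoretically over $\ol K$.

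Finally, invoke a standard radical computation, e.g.\ the one in \cite{KrLo91}, to produce generators of $\sqrt{I}$; by the Nullstellensatz applied over $\ol K$, this $\sqrt{I}$ coincides with the vanishing ideal of $X(V)$ in $K[B\times Q(V)]$, which is the desired output.

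\textbf{Main obstacle.} The only nontrivial ingredient is the symbolic computation of $Q(\phi)$ as a matrix of polynomials in the $y_{ij}$; this is precisely what the constructive Friedlander--Suslin lemma delivers. Everything else---expanding $F_i$ as a polynomial in $y$, collecting coefficients, and computing a radical---is classical finite-dimensional commutative algebra over the computable field $K$.
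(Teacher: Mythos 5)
Your proposal is correct and follows essentially the same approach as the paper's proof: substitute a generic linear map with indeterminate matrix entries, expand the resulting expressions as polynomials in those indeterminates, collect the coefficients in $K[B\times Q(V)]$, and compute the radical of the ideal they generate. The only cosmetic difference is in how correctness is justified---you argue set-theoretically (vanishing of all coefficients at a point is equivalent to identical vanishing of the polynomial in $y$, using that $\ol K$ is infinite, then invoke the Nullstellensatz), while the paper phrases it as a linear-span equality between the coefficients and the pullbacks $f_i\circ(\id_B\times Q(\phi))$; both arguments are valid and lead to the same conclusion.
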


\begin{proof}
The algorithm $\smear(B;Q;U;f_1,\ldots,f_k;V)$ proceeds as follows:
choose identifications $U=K^m$ and $V=K^n$ and construct the
generic matrix 
\[ \psi:=\sum_{i=1}^{\dim U} \sum_{j=1}^{\dim V}
z_{ij} E_{ij} \in K[(z_{ij})_{ij}] \otimes \Hom_K(V,U),\] 
where the $E_{ij}$
form the standard basis of $\Hom_K(V,U)$. Then compute $Q(\psi) y \in
K[(z_{ij})_{ij}] \otimes Q(U)$, where $y=(y_1,\ldots,y_{\dim
Q(V)})^T$ represents a point of $Q(V) \cong K^{\dim Q(V)}$ with
variables as coordinates, substitute $Q(\psi)y$ into
the $f_i$, and expand as a polynomial in the $z_{ij}$ with coefficients
in $K[B][y_1,\ldots,y_{\dim Q(V)}] \cong K[B \times Q(V)]$.  Finally, return
generators of the radical of the ideal generated by all these
coefficients. The correctness of this algorithm follows from the
fact that those coefficients span the same space as the images
of the $f_i$ under pull-back along the linear maps $Q(\phi):Q(V)
\to Q(U)$ for all $\phi \in \Hom(V,U)$. 
\end{proof}

\subsection{The parameterisation algorithm}

The algorithm $\parm$ is recursive and proceeds as follows;
the algorithmic part is written in normal font, text that will be used
in the analysis in italic. The proofs of termination and correctness
are below.

\begin{enumerate}
\item If $Q=0$, then compute the variety $A \subseteq B$ defined
by $f_1,\ldots,f_k$ via a radical ideal computation, return $(A,0, A \hookrightarrow B)$, and exit.
\label{it:Exit1}

\item Decompose $Q=Q' \oplus R$ where $R$ is an irreducible
subfunctor of the top-degree part of $Q$.

{\em 
This corresponds to choosing a partition in the tuple representing $Q$.
Let $x_1,\ldots,x_n$ be a basis of $R(U)^*$.
We regard elements in $K[B \times Q(U)] \cong K[B \times Q'(U)]
\otimes K[R(U)] \cong K[B \times Q'(U)][x_1,\ldots,x_n]$ as polynomials
in $x_1,\ldots,x_n$ with coefficients in $K[B \times Q'(U)]$.}

\item Compute 
\[ (f_1',\ldots,f'_r):=\smear(B;Q;U;f_1,\ldots,f_k;U) \] 
and a Gr\"obner basis $(g_1,\ldots,g_l)$ of the eliminiation ideal in $K[B
\times Q'(U)]$ obtained by eliminating all $x_i$ from $f_1',\ldots,f'_r$.

{\em 
The elements $f_1',\ldots,f'_r$ generate the radical ideal of
$X(U)$ in $B \times (Q'(U) \oplus R(U))$. 
Let $X'(V)$ be the closure of the image of $X(V)$ under projection
$B \times Q(V) \to B \times Q'(V)$, so $X'$ is a closed subset
of $B \times Q'$. 
Then $g_1,\ldots,g_l$ generate the
radical ideal of $X'(U)$ by construction; and, as we will see
below in the proof of correctness, we have $X'=\cV_{B \times
Q'}(g_1,\ldots,g_l)$.}

\item In each $f_i$, replace each coefficient by its normal form modulo
$g_1,\ldots,g_l$.

{\em This has the effect that all coefficients that vanish identically
on $X'(U)$ are set to zero.}

\item If all $f_i$ are now zero, then compute
\[ (A';P';\alpha'):= \parm(B;Q';U;g_1,\ldots,g_l), \] 
output $(A',P' \oplus R, \alpha' \times \id_R)$, and exit.
\label{it:Exit2}

\item Pick the minimal $i$ for which $f_i$ is nonzero and let $x_j$ be
a variable that appears in some monomial in $f_i$ with a nonzero
coefficient. 

{\em There cannot be degree-$0$ elements among the $f_i$, because these
would lie in the elimination ideal and hence have been reduced to zero.}

\item Compute the partial derivative $h:=\partial f_i / \partial x_j
\in K[B \times Q'(U)][x_1,\ldots,x_n]$.

{\em By construction, this $h$ is nonzero and its coefficients, which
are a subset of the coefficients of $f_i$ up to some positive integer
scalars, do not lie in the ideal generated by $g_1,\ldots,g_l$.}

\item Compute 
\[ 
(A';P';\alpha'):=\parm(B;Q;U;h,f_1,\ldots,f_k).
\]
\label{it:Call4}
\vspace{-3ex}

\item Compute $\tilde{Q}:=(\Sh_{U} Q)/R$ via \cite[Exercise
6.11]{Fulton91}, determine the pure part $Q''$ of $\tilde{Q}$, and
compute $B'':=(B \times Q(U))[1/h]$.

{\em So we have $(B \times \Sh_U Q)[1/h] = B'' \times (Q'' \oplus R)$.}

\item Compute 
\[ (f_1'',\ldots,f_s''):=\smear(B;Q;U;f_1,\ldots,f_k;U \oplus U) \]
and replace each $f_i''$ by its image in $K[B'' \times (Q''(U)
\oplus R(U))]$ under the map $K[B \times Q(U \oplus U)] \to K[B''
\times (Q''(U) \oplus R(U))]$ dual to the inclusion 
\begin{align*} B'' \times (Q''(U) \oplus R(U))
&\subseteq B \times Q(U) \times (Q''(U) \oplus R(U))\\
&= B \times (\Sh_U Q)(U) = B \times Q(U \oplus U). \end{align*}
{\em The elements $f_1'',\ldots,f_s''$ generate the radical ideal of $(\Sh_U X)(U)[1/h]$
in $B'' \times (Q''(U) \oplus R(U))$.  As we will see in the proof
of correctness, we have $(\Sh_U X)[1/h]=\cV_{B'' \times (Q'' \oplus
R)}(f_1'',\ldots,f_s'')$.}

\item 
Using Buchberger's algorithm to eliminate the coordinates on
$R(U)$ from $f_1'',\ldots,f_s''$, compute equations $g''_1,\ldots,g''_t$ for the projection
of the finite-dimensional affine variety $(\Sh_{U} X)(U)[1/h]$ into $B'' \times Q''(U)$.
\label{it:linelm}

{\em Recall from \cite[Lemma 25]{Draisma17} that the projection $(B'' \times (Q'' \oplus R)) \to B'' \times
Q''$ restricts to a closed embedding from $(\Sh_{U} X)[1/h]$ to a
closed subset $X''$ of the latter space. We will see below
that $X''=\cV_{B'' \times Q''}(g''_1,\ldots,g''_t)$.}

\item Compute the inverse $\iota:X'' \to (\Sh_{U}X)[1/h]$.

{\em By Lemma~\ref{lm:Unique}, this inverse is uniquely determined by
its instance $\iota_V$ with $\dim V=\deg(Q)$.}

\item Compute
\[ (A'';P'';\alpha''):=\parm(B'';Q'';U;g''_1,\ldots,g''_t),\] 
output
\[ (A' \sqcup A'', P' \oplus P'', \alpha' \sqcup (\pi \circ \iota
\circ  \alpha'')), \]
where $\pi: B \times (\Sh_{U} Q) \to B \times Q$, evaluated at $V$,
equals $\id_B \times Q(0_{U,0} \oplus \id_V)$, and exit. \label{it:Final}

{\em Here $\alpha'$ is regarded as a map $A' \times (P' \oplus P'')
\to B \times Q$ that ignores the argument from $P''$, and similarly $\alpha''$
ignores the component in $P'$.}

\label{it:Exit3}

\end{enumerate}

\subsection{Termination of $\parm$}

\begin{proof}[Proof of termination of $\parm$.]
If, on some input, $\parm$ would not terminate, then this were due
to an infinite chain of recursive calls to itself. In the recursive
calls in steps~\eqref{it:Exit2} and~\eqref{it:Exit3}, the polynomial
functors $Q'$ and $Q''$, respectively, are smaller than $Q$ in the
order from \S\ref{ssec:Order}, whereas $Q$
remains the same in the call in step~\eqref{it:Call4}. Since the order
on polynomial functors is well-founded, apart from a finite initial
segment, the infinite chain consists entirely of consecutive calls in
step~\eqref{it:Call4}. Now note that, after each such call, $X'(U)$ either
remains constant or becomes smaller. As long as it remains
constant, i.e., the list $(g_1,\ldots,g_l)$ remains constant,
the
degree in $x_1,\ldots,x_n$ of the first equation keeps dropping. Hence
after finitely many such steps, $X'(U)$ becomes strictly
smaller. It follows that $X'(U)$ becomes smaller infinitely
often, but this contradicts the Noetherianity of the
finite-dimensional variety $B \times Q'(U)$. 
\end{proof}

\subsection{Correctness of $\parm$}

\begin{proof}[Proof that $\parm$ returns the correct output.]
We now prove that the output of the algorithm is correct. This is
immediate if the algorithm exits in step~\eqref{it:Exit1}. 

If it exits in step~\eqref{it:Exit2}, then by
Lemma~\ref{lm:Projection1} below,
$X=X' \times R$, which is parameterised by $\alpha' \times \id_R$ for
a parameterisation $\alpha'$ of $X'$. Moreover, by the same lemma, $X'$
is defined by $g_1,\ldots,g_l$.

Finally, assume that the algorithm exits in step~\eqref{it:Exit3}. We need
to show that the union of the images of $\alpha'$ and $\pi \circ \iota
\circ \alpha''$ (on $\ol{K}$-points) equals $X$.

First consider $\alpha'$, which is computed in step \eqref{it:Call4}.
The closed subset $Y:=\cV_{B \times Q}(f_1,\ldots,f_k,h)$ parameterised
by $\alpha'$ is clearly contained in $X$, so $\alpha':A' \times P' \to B
\times Q$ has image on $\ol{K}$-points contained in the $\ol{K}$-points
of $X$.

Next we argue that $\cV_{B'' \times (Q'' \oplus R)}(f_1'',\ldots,f_s'')$
is precisely $(\Sh_UX)[1/h]$. First, by construction, $f_1'',\ldots,f_s''$
generate the ideal of $(\Sh_UX)[1/h](U)$ in $(B \times Q(U \oplus
U))[1/h]$; this shows the containment $\supseteq$. For the opposite
containment we argue that if $V$ is any finite-dimensional $K$-vector
space and $(b,q) \in B \times Q(U\oplus V)$ has the property that for all
$\phi \in \Hom(V,U)$ the point $(b,Q(\id_U \oplus \phi)q) \in B \times
Q(U \oplus U)$ lies in $X(U \oplus U)$, then $(b,q) \in X(U \oplus V)$. To
show that $(b,q) \in X(U \oplus V)$, since $X$ is defined by its equations
in $K[B \times Q(U)]$, it suffices to prove that for each $\psi \in \Hom(U
\oplus V,U)$ we have $(b,Q(\psi)q) \in X(U)$. Let $\phi:=\psi|_V \in
\Hom(V,U)$ be the restriction of $\psi$ to $V$. Then 
\[ \ker(\id_U
\oplus \phi: U \oplus V \to U \oplus U) = \ker(\phi) \subseteq
\ker(\psi) \] 
and
hence
the linear map $\psi$ factors
as $\psi' \circ (\id_U \oplus \phi)$ for some $\psi' \in \Hom(U \oplus
U,U)$. Hence, since $Q$ is a functor, 
\[ (b,Q(\psi)q)=(b,Q(\psi') (Q(\id_U \oplus \phi) q))
\in X(U)\] 
because $(b,Q(\id_U \oplus \phi)q) \in X(U \oplus
U)$ and $\id_B \times Q(\psi')$ maps $X(U \oplus U)$
into $X(U)$. This concludes the proof that $\cV_{B'' \times (Q''
\oplus R)}(f_1'',\ldots,f_s'')=(\Sh_UX)[1/h]$. 

Next, by \cite[Lemma 25]{Draisma17}, the projection $B'' \times (Q''
\oplus R) \to B'' \times Q''$ restricts to an isomorphism embedding
from $(\Sh_U X)[1/h]$ to a closed subset of $B'' \times Q''$, and by
Lemma~\ref{lm:Projection2} below, the equations of $X''$ can be pulled
back from the equations of $X''(U)$, so we have indeed $X''=\cV_{B'' \times Q''}(g''_1,\ldots,g''_t)$.

Finally, consider step~\ref{it:Final}. A straightforward calculation
shows that setting $\pi_V:=\id_B \times Q(0_{U,0} \oplus \id_V)$ does
indeed yield a morphism $B \times \Sh_U Q \to B \times Q$ that maps
$\Sh_U X$ into $X$. We need to
show that if $\alpha'':A'' \times P'' \to B'' \times Q''$ is a morphism
parameterising $X''$, then $\pi \circ \iota \circ \alpha''$ is a morphism
$A'' \times P'' \to B \times Q$ whose image contains all points in $X$
that are not in the subset $Y$ parameterised by $\alpha'$. So assume that
$(b,q) \in X(V) \subseteq B \times Q(V)$ does not lie in $Y(V)$. Then
there exists a linear map $\phi:V \to U$ such that $h(b,Q(\phi)q)
\neq 0$. Let $\psi \in \Hom(V ,U \oplus V)$ be the map $v \mapsto
(\phi(v),v)$. Then, since $(\id_U \oplus 0_{V,0}) \circ \psi = \phi$,
the point $q':=Q(\psi)q \in X(U \oplus V)$ satisfies
$h(b,Q(\id_U \oplus 0_{V,0})q')=h(b,Q(\phi)q) \neq 0$, i.e., $q'$ lies
in $(\Sh_U X)[1/h]$, which is the image of $\iota \circ \alpha''$.
Moreover, since $(0_{U,0} \oplus \id_V) \circ \psi=\id_V$, we have 
$\pi_V(q')=q$. Hence $q$ lies in the image of $\pi \circ \iota \circ
\alpha''$. This concludes the proof of correctness of $\parm$.
\end{proof}

\begin{lm} \label{lm:Projection1}
Let $Q$ be a pure polynomial functor and $X \subseteq B \times
Q$ a closed subset. Assume that $X$ is defined by its equations in $K[B \times
Q(U)]$, let $R$ be a subfunctor of $Q$, and set $Q':=Q/R$. Define $X'$
as the closure of the image of $X$ under the projection $\id_B \times
\pi: B \times Q \to B \times Q'$. Assume that $X(U)=(\id_B \times
\pi_U)^{-1}(X'(U))$. Then $X(V)=(\id_B \times \pi_V)^{-1}(X'(V))$ for
all $V \in \Vec$, so that $X=X' \times R$, and moreover $X'$ is defined
by its equations in $K[B \times Q'(U)]$.
\end{lm} 

\begin{proof}
Let $(b,q) \in B \times Q(V)$ satisfy $(b,\pi_V(q))
\in X'(V)$. Then for all $\phi \in \Hom(V,U)$ we have
\[ (b,\pi_U(Q(\phi)(q))) = (b,Q'(\phi)(\pi_V(q))) \in X'(U) \]
and hence, by assumption, $(b,Q(\phi)(q)) \in X(U)$. But then, since
$X$ is defined by its equations in $K[B \times Q(U)]$, we have $(b,q)
\in X(V)$. This proves the first statement. 

For the last statement, suppose that $(b,q') \in B \times
Q'(V)$ is such that $(b,Q'(\phi)(q')) \in X'(U)$ for all $\phi
\in \Hom(V,U)$. Pick any $q \in Q(V)$ with $\pi_V(q)=q'$. Then
the same computation as above shows that $(b,q) \in X(V)$, hence {\em
a fortiori} $(b,q') \in X'(V)$. Hence $X'$ is defined by its equations
in $K[B \times Q'(U)]$.
\end{proof}

\begin{lm} \label{lm:Projection2}
Let $Q$ be a pure polynomial functor, $R$ a (not necessarily irreducible)
subfunctor, $B$ an affine variety. Set $Y:=B\times Q$, $Y':=B\times (Q/R)$. 
Let $X$ be a closed subset of $Y$ such that the projection $\pi: Y \to Y'$
restricts to an isomorphism from $X$ to a closed subset $X'$ of $Y'$.
Let $U$ be a vector space such that $X$ is defined by its equations in
$K[Y(U)]$. Then $X'$ is defined by its equations in $K[Y'(U)]$.
\end{lm}

\begin{proof}
The idea is to find an inverse to $\pi$, i.e., a morphism $\psi: Y' \to Y$
such that 
\begin{enumerate}
    \item $\pi_V \circ \psi_V = \id_{Y'(V)} $
    \item $\psi|_{X'} = (\pi|_X)^{-1}$
\end{enumerate}

Once we have found this $\psi$, we are done: Let $y' \in Y'(V)$, such that for
every linear map $\varphi: V \to U,$ $Y'(\varphi)(y') \in X'(U)$, and hence, by property (2),
$\psi_U(Y'(\varphi)(y')) \in X(U)$. Since $\psi$ is a morphism, 
we get 
$$\psi_U(Y'(\varphi)(y')) = (Y(\varphi) (\psi_V(y')) \in X(U).$$
Since $X$ is defined by its equations in $K[Y(U)]$, we get that $\psi_V(y') \in X(V)$,
and hence, with property $(1)$ of $\psi$, $\pi_V(\psi_V(y')) = y' \in X'(V)$.

By the Friedlander-Suslin-Lemma (Lemma \ref{lm:FS} and proof of
Lemma \ref{lm:Unique}), it is enough to find a $\GL_d$-equivariant map 
$\tilde{\psi}: Y'(K^d) \to Y(K^d)$, where $d$ is the degree of $Q$,
that fulfills properties (1) and (2) (with $V=K^d$ and $\psi_{K^d} = \tilde{ \psi}$).

It is easy to find a possibly non-$\GL_d$-equivariant map $\hat{\psi}:
Y'(K^d) \to Y(K^d)$ with properties (1) and (2): Note that $X'(K^d)$
is a closed subvariety of the affine space $\AA^{n'}_B$, where
$n'=\dim(Q(K^d)/R(K^d))$ and $X(K^d)$ is a closed subvariety of the
affine space $\AA^n_B$, where $n=\dim(Q(K^d))$.  By basic properties of
affine spaces, the map $(\pi_{K^d}|_{X(K^d)})^{-1}$ extends to a morphism
$\hat{\psi}$ from the ambient affine space $\AA^{n'}_B$ of $X'(K^d)$ to
the ambient affine space $\AA^n_B$ of $X(K^d)$. Now $\hat{\psi}$ lives
in some finite-dimensional space equipped with a linear action of $\GL_d$
given by $(g,\eta) \mapsto Y(g) \circ \eta \circ Y'(g^{-1})$. Since
this space is an algebraic group representation of $\GL_d$ and $\GL_d$
is linearly reductive, there exists a $\GL_d$-equivariant linear map $R$
from the space onto its subspace of $\GL_d$-invariant elements---the
so-called {\em Reynolds operator}.  Then $\tilde{\psi}:=R(\hat{\psi})$
is $\GL_d$-equivariant, and we claim that it still satisfies (1) and (2). 

By the Lefschetz principle, it is sufficient to check this when 
$K=\CC$. In this case, $\tilde{\psi}$ takes the more explicit form 
$$\tilde{\psi}:= \int_{U_d(\CC)} Y(\varphi) \circ \hat{\psi} \circ
Y'(\varphi)^{-1}\,\mathrm{d}\varphi$$
where $U_d(\CC) \subseteq \GL_d(\CC)$ is the unitary group, and the integral
is over the Haar measure (see e.g. \cite{Procesi07}) with $\int_{U_d(\CC)}\,\mathrm{d}\varphi=1$
(note that the Haar integral needs to be taken over a compact group, so we have
to use $U_d(\CC)$ instead of $\GL_d(\CC)$).

Indeed, by construction, the morphism $\tilde{\psi}$ defined by
this integral is $U_d$-equivariant. To see that it is, indeed,
$\GL_d$-equivariant, one uses that $U_d(\CC)$ is Zariski-dense in
$\GL_d(\CC)$ \cite[Section 8.6.1]{Procesi07}.  This is a version of
Weyl's well-known {\em unitary trick}.

The following chain of equations now verifies that $\tilde{\psi}$ meets property (1):
\begin{align*}
    &\pi_{\CC^d} \circ \tilde{\psi} 
    = \pi_{\CC^d} \circ \int_{U_d(\CC)} Y(\varphi) \circ
    \hat{\psi} \circ Y'(\varphi)^{-1}\,\mathrm{d}\varphi
    = \int_{U_d(\CC)} \pi_{\CC^d} \circ Y(\varphi) \circ
    \hat{\psi} \circ Y'(\varphi)^{-1}\,\mathrm{d}\varphi\\
    &= \int_{U_d(\CC)}  Y'(\varphi) \circ \pi_{\CC^d} \circ
    \hat{\psi} \circ Y'(\varphi)^{-1}\,\mathrm{d}\varphi
    = \int_{U_d(\CC)}  \id_{Y'(\CC^d)}\,\mathrm{d}\varphi.
\end{align*}
For property (2), let $x \in X'(\CC^d)$. Then
\begin{align*}
    &\tilde{\psi}(x)
    =\int_{U_d(\CC)} Y(\varphi)(\hat{\psi}
    (\underbrace{Y'(\varphi)^{-1}(x)}_{\in
    X'(\CC^d)}))\,\mathrm{d}\varphi\\
    &= \int_{U_d(\CC)} Y(\varphi)((\pi_{\CC^d}|_{X(\CC^d)})^{-1}
    (Y'(\varphi)^{-1}(x)))\,\mathrm{d}\varphi\\
    &= \int_{U_d(\CC)} Y(\varphi)(Y(\varphi)^{-1}
    ((\pi_{\CC^d}|_{X(\CC^d)})^{-1}(x)))\,\mathrm{d}\varphi
    = (\pi_{\CC^d}|_{X(\CC^d)})^{-1}(x).
\end{align*}
Note that for the second to last equality we have used that $(\pi|_X)^{-1}$
is a morphism, as is easy to verify.
\end{proof}

\section{Implicitisation}\label{sec:Implicitisation}

\subsection{The result}

The goal of this section is to prove the following more
precise version of the Main Theorem.

\begin{thm} \label{thm:impl}
There exists an algorithm $\impl$ that, on input finite-dimensional
affine varieties $A,B$, pure polynomial functors $P,Q$, and a morphism
$\alpha:A \times P \to B \times Q$, computes a $U \in \Vec$ and elements
$f_1,\ldots,f_k \in K[B \times Q(U)]$ that define the image
closure of $\alpha$, i.e., such that, for all $V \in \Vec$,
we have
\begin{align*} &\overline{\alpha(A(\ol{K}) \times \ol{K} \otimes P(V))}
\\
& = \{ (b,q) \in B(\ol{K}) \times \ol{K} \otimes Q(V) \mid
\forall i\  \forall \phi \in
\Hom_K(V,U):\ f_i(b, 1 \otimes Q(\phi)q) =0 \}.
\end{align*}
\end{thm}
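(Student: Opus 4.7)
The plan is to run two searches in parallel across countably many processors indexed by $n=0,1,2,\ldots$. On processor $n$, first use the algorithmicity of Lemma~\ref{lm:Unique} to write down the finite-dimensional morphism $\alpha_{K^n}:A\times P(K^n)\to B\times Q(K^n)$ explicitly, then apply Buchberger's algorithm to extract a generating set $F_n\subseteq K[B\times Q(K^n)]$ of the radical ideal of the Zariski closure $\overline{\alpha_{K^n}(A\times P(K^n))}$. Next, invoke $\parm$ on $(B,Q,K^n,F_n)$ (Theorem~\ref{thm:parm}) to obtain a parameterisation $\beta_n:A_n\times P_n\to B\times Q$ of the closed subset $Y_n:=\cV_{B\times Q}(F_n)$. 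Finally, launch a subroutine $\cert$ on the pair $(\alpha,\beta_n)$ that attempts to certify the inclusion $\im(\beta_n)\subseteq\overline{\im(\alpha)}$. As soon as $\cert$ halts successfully on some processor $n$, $\impl$ returns $(K^n,F_n)$.

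For correctness, fix $V\in\Vec$ and $\phi\in\Hom(V,K^n)$; by naturality of $\alpha$, we have $(\id_B\times Q(\phi))\circ\alpha_V=\alpha_{K^n}\circ(\id_A\times P(\phi))$, so the pullback of any $f_i\in F_n$ along $\id_B\times Q(\phi)$ vanishes on $\alpha_V(A\times P(V))$ and hence on its closure. Therefore $\overline{\im(\alpha)}\subseteq Y_n$ for every $n$. The reverse inclusion is exactly what $\cert$ verifies before returning, so the output equations $F_n$ cut out $\overline{\im(\alpha)}$ set-theoretically, which is the condition in the theorem statement.

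For termination, apply Corollary~\ref{cor:Finiteness} to the closed subset $\overline{\im(\alpha)}$ of $B\times Q$: there exists some $n_0$ such that the equations of $\overline{\im(\alpha_{K^{n_0}})}$ already pull back to define $\overline{\im(\alpha_V)}$ for every $V$. For every $n\ge n_0$ we therefore have $Y_n=\overline{\im(\alpha)}$, so the inclusion $\im(\beta_n)\subseteq\overline{\im(\alpha)}$ holds tautologically and $\cert$ is guaranteed to succeed on some sufficiently large processor. Parallelisation is essential: when $n<n_0$ the inclusion may genuinely fail, in which case $\cert$ need not terminate, and a serial enumeration would therefore have no stopping criterion. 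Once we know $\cert$ halts on at least one $n$, standard dovetailing reduces the whole procedure to an ordinary Turing machine.

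The main obstacle is the design and justification of $\cert$. Its strategy is to pass to the infinite-dimensional limit and invoke the limit theorem of \cite{Bik22}, which characterises the containment $\im(\beta_n)\subseteq\overline{\im(\alpha)}$ as the existence, at a suitably generic $\ol{K}$-point $y$ of $\im(\beta_{n,\infty})$, of a formal arc inside $\im(\alpha_\infty)$ limiting to $y$. Such an arc is {\em a priori} an object in infinite-dimensional space, but Greenberg's approximation theorem (Theorem~\ref{thm:Greenberg}), in the effective form of \cite{Rond18}, reduces its existence to the solvability of an explicit polynomial system in finitely many variables of bounded degree, obtained by truncating the arc at a computable order. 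This reduces certification to a decidable question, which $\cert$ can resolve by enumerating candidate truncated arcs, thereby halting exactly when the target inclusion holds.
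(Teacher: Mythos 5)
Your proposal matches the paper's proof of Theorem~\ref{thm:impl} essentially verbatim: the same parallel elimination--$\parm$--$\cert$ pipeline, the same two-inclusion correctness argument (naturality gives $\overline{\im(\alpha)}\subseteq Y_n$, and $\cert$ supplies the reverse inclusion), and the same termination argument via Corollary~\ref{cor:Finiteness}. Your sketch of $\cert$ (pass to the infinite-dimensional $\GL$-variety, apply the limit theorem of \cite{Bik22}, and truncate the witness arc via Greenberg's effective approximation theorem) likewise anticipates the content of the paper's Section~\ref{sec:Certify}.
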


\subsection{The implicitisation algorithm}

The algorithm $\impl(A,P,B,Q,\alpha)$ is run on countably many parallel
processors: the one where the original call is handled, plus countably
many labelled $0,1,2,3,\ldots$ where calls to an auxiliary procedure
$\cert$ is made. The latter has the following specification.

\begin{prop} \label{prop:Cert}
There exists a procedure that, on input finite-dimensional affine
varieties $A,A',B$, pure polynomial functors $P,P',Q$, and morphisms
$\alpha:A \times P \to B \times Q$ and $\alpha': A' \times P' \to B
\times Q$, has the following behaviour: if for all $V \in \Vec$ we have
\[ \alpha'(A'(\overline{K}) \times (\overline{K}
\otimes P'(V))) \subseteq 
\overline{\alpha(A(\overline{K}) \times (\overline{K}
\otimes P(V)))}, \]
then $\cert(B;Q;A;P;\alpha;A';P';\alpha')$ terminates and
returns ``true''. Otherwise, it does not terminate.
\end{prop}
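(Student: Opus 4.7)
The plan is to translate the semantic criterion $\im(\alpha') \subseteq \overline{\im(\alpha)}$ into a search for a finitely describable witness, so that $\cert$ can enumerate candidates in parallel and halt as soon as a valid one is found. The key input is the limit characterisation from \cite{Bik22} sketched in \S\ref{ssec:Structure}: after passing to the infinite-dimensional ambient spaces, the containment holds if and only if a suitably generic point $y$ of $\im(\alpha'_\infty)$ arises as the value at $t=0$ of a formal arc supported in $\im(\alpha_\infty)$. Concretely, this reduces the question to the existence of an $L[[t]]$-valued point of $A \times P$, for a suitable field $L$ over which $y$ is defined, whose image under $\alpha$ specialises to $y$ at $t=0$.

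First, I would construct an explicit generic point $y$. Choose a finite-dimensional space $W$ large enough that the genericity requirement of \cite{Bik22} is already seen on $(B \times Q)(W)$, and let $L$ be the function field of a dominant component of $A' \times P'(W)$. The tautological $L$-point of $A' \times P'(W)$ yields, via $\alpha'_W$, an $L$-point $y$ of $(B \times Q)(W)$ that serves as the generic point. The existence of a formal arc witnessing $y \in \overline{\im(\alpha)}$ can then be encoded, after shifting $A \times P$ by some auxiliary finite-dimensional $U$, as a polynomial system over $L$ in the Taylor coefficients of a curve taking values in the finite-dimensional affine space $A \times P(U)$: that this finite reduction is possible follows from the fact that any formal arc into the infinite-dimensional limit factors through a bounded shift, in the spirit of Corollary~\ref{cor:Finiteness} and of the shift constructions used throughout \S\ref{sec:Preliminaries}.

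Next, I would invoke Greenberg's approximation theorem (Theorem~\ref{thm:Greenberg}) in the effective form of \cite{Rond18}: the existence of a genuine $L[[t]]$-solution to the polynomial system is equivalent to the existence, for some computable $N$, of a solution modulo $t^N$ that can be shown \emph{a priori} to lift. Accordingly, $\cert$ runs, in parallel over the shift dimension $U$ and over truncation orders $N = 1, 2, 3, \ldots$, the following test: expand the defining equations of the candidate arc up to order $N$, obtain a finite polynomial system in the Taylor coefficients, and decide its solvability over $L$ using standard Buchberger-style computations and radical ideal membership, working over the function field $L$. As soon as one such instance is solvable at a truncation at or above the effective Greenberg bound, $\cert$ outputs \textbf{true}; otherwise the parallel search proceeds to the next pair $(U, N)$.

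For the termination behaviour: if $\im(\alpha') \subseteq \overline{\im(\alpha)}$, the existence theorem of \cite{Bik22} supplies a formal arc at some bounded shift, and effective Greenberg bounds the truncation order at which one of the parallel processes reports solvability, so $\cert$ halts and returns \textbf{true}; if the containment fails, no such arc exists at any shift or at any truncation, every parallel instance is perpetually unsolvable, and $\cert$ runs forever, which is exactly the specification. The principal obstacle I expect is the finite-dimensional reduction: one must justify that candidate formal arcs can be taken to live in a shift by a finite-dimensional space whose dimension is effectively controlled by the degrees of $P, Q$ and the data of $\alpha$, and one must apply effective Greenberg in the relative setting where the ground ring is the function field $L$ rather than $K$ itself. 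Getting these two ingredients to fit together cleanly, so that the parallel search is guaranteed to encounter \emph{some} successful $(U, N)$ whenever the arc exists, is the technical heart of the argument.
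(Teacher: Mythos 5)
Your overall skeleton---invoke the limit characterisation from \cite{Bik22}, reduce to a finitely describable arc, apply effective Greenberg, search in parallel---matches the paper's strategy at a high level, but there are two genuine gaps precisely where you yourself flag the ``technical heart.'' First, the generic point construction: the point $x$ fed into Theorem~\ref{thm:Limit} must have dense $\GL$-orbit in $\im(\alpha'_\infty)$, and this is achieved in the paper by taking $x=\alpha'_\infty(a',p')$ with $a'$ the generic point of $A'$ \emph{and} $p'\in P'_\infty(K)$ an infinite-dimensional point with dense $\GL$-orbit (such points exist by \cite{Bik21}). Your proposal to use the tautological $L$-point of $A'\times P'(W)$ for a ``large enough'' finite-dimensional $W$ does not work: no finite-level point has dense $\GL$-orbit in $P'_\infty$, so there is no such $W$, and the genericity hypothesis of \cite{Bik22} cannot be verified this way.

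Second, and more seriously, your finite-dimensional reduction of the arc search is wrong. You claim that a formal arc in $A\times P_\infty$ ``factors through a bounded shift'' $A\times P(U)$, appealing to the spirit of Corollary~\ref{cor:Finiteness}. That corollary is about closed subsets being cut out at a finite level, not about curves factoring through a finite level; an $\widetilde{\Omega}((t))$-point of $A\times P_\infty$ can perfectly well have all of its (infinitely many) coordinates nonzero and will then not land in any $A\times P(U)$. The actual reduction in the paper is Proposition~\ref{prop:Easier}: using the machinery of systems of variables in Schur functors (Theorem~\ref{thm:Variables}), one shows the witness arc may be taken of the form $y(t)=(a(t),\gamma(t)_\infty(p'))$ with $\gamma(t)$ a Laurent point of the \emph{finite-dimensional space $\Map(P',P)$ of morphisms}. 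This is the missing idea; without it, the search parameter in your procedure (shift dimension $U$) does not yield a correct reduction, and there is no reason your parallel search would find a witness even when one exists. (Once the correct reduction is in place, the only unbounded parameter to search over is the lower bound $-d_1$ on exponents of $t$; the shift dimension search is unnecessary.)
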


The proof of Proposition~\ref{prop:Cert} is deferred to
\S\ref{sec:Certify}. We can now present the steps for
$\impl(A,P,B,Q,\alpha)$.

\begin{enumerate}
\item Set $n:=0$.
\item While no instance of $\cert$ has returned ``true'', 
perform steps (3)--(6):
\item Set $U:=K^n$ and, by classical elimination, compute defining
equations $f_{n,1},\ldots,f_{n,k_n} \in K[B \times Q(U)]$ for the image
closure of $\alpha_U$\\ ({\em this is a finite-dimensional affine variety}).
\item Compute $(A_n,P_n,\alpha_n):=\parm(B;Q;U;f_{n,1},\ldots,f_{n,k_n})$. 
\item On the $n$-th processor, start
$\cert(B;Q;A;P;\alpha;A_n;P_n;\alpha_n)$.
\item Set $n:=n+1$.
\item if the $m$-th processor has returned ``true'', then
return $(K^{m};f_{m,1},\ldots,f_{m,k_{m}})$. 
\end{enumerate}

\subsection{Correctness and termination of $\impl$}

\begin{proof}[Proof of Theorem~\ref{thm:impl}]
By Corollary~\ref{cor:Finiteness}, there exists a value of
$n$ such that the tuple
$(K^{n};f_{n,1},\ldots,f_{n,k_n})$ computed in iteration $n$ is a correct
output for $\impl$. It then follows that $\alpha_n$, which, by virtue
of $\parm$, parameterises the closed subset of $B \times Q$ defined by
$f_{n,1},\ldots,f_{n,k_n}$, has its image contained in the closure of
the image of $\alpha$. Hence, by Proposition~\ref{prop:Cert},
the $n$-th call to $\cert$ terminates and returns ``true''. This shows that $\impl$
terminates. 

Next, when it terminates with output
$(K^m;f_{m,1},\ldots,f_{m,k_m})$, then this is because the
image of $\alpha_m$, which equals the closed subset of
$B \times Q$ defined by $f_{m,1},\ldots,f_{m,k_m}$, is
contained in the image closure of $\alpha$. Since,
conversely, the image closure of $\alpha$ is contained in
the closed subset defined by
$f_{m,1},\ldots,f_{m,k_m}$, the output is correct. 
\end{proof}

\section{Certifying inclusion of image closures}
\label{sec:Certify}

\subsection{An instructive example}

\begin{ex} \label{ex:Limit}
Let $\alpha:(S^1)^2 \to S^3$ be the morphism defined by
$\alpha(u,v)=u^3+v^3$. The image closure of $\alpha$ is the set of
symmetric three-tensors of {\em border Waring rank} at most $2$. On the other
hand, let $\beta:(S^1)^2 \to S^3$ be the morphism defined by $\beta(u,v)=6
u^2v$. Then we have
\[ \beta(u,v)=6 u^2 v=\lim_{t \to 0} 
[(t^2 v + t^{-1}u)^3 + (t^2 v - t^{-1}u)^3] = \lim_{t \to 0}
\alpha(t^2 v + t^{-1}u,t^2 v - t^{-1}u) \]
and this implies that $\im(\beta) \subseteq
\overline{\im(\alpha)}$. However, $\im(\beta) \not \subseteq
\im(\alpha)$---indeed, it is well known that the Waring rank of cubics
of the form $u^2 v$ with $u,v$ linearly independent vectors is equal to
$3$.
\end{ex}

This example shows a {\em certificate} for $\im \beta \subseteq
\overline{\im \alpha}$, namely, that $\beta$ is a limit of a
composition $\alpha \circ \gamma_t$ with 
\[ \gamma_t:(S^1)^2 \to (S^1)^2, (u,v) \mapsto (t^2 v +
t^{-1} u, t^2 v - t^{-1} u). \]
Roughly speaking, whenever $\im(\beta)$ is contained in
$\overline{\im(\alpha)}$, where $\beta,\alpha$ are morphisms into
$B \times Q$, with $B$ a finite-dimensional variety and $Q$ a pure
polynomial functor, there is a certificate of this inclusion such as
the one above---see below for the precise statement. However, we are not
aware of any {\em a priori} lower bound on the (negative) exponents of
$t$ in such a certificate. This is why, in
Proposition~\ref{prop:Cert},
the procedure $\cert$ does not terminate when no certificate exists.

\subsection{An excursion to infinite dimensions} \label{ssec:InfDim}

We collect some material on $\GL$-varieties.  The results stated here
appear in \cite{Bik21,Bik22} or can directly be derived from results
there.

Given a finite-dimensional variety $A$ and a pure polynomial functor
$P$, we construct the inverse limit $\lim_{\ot n} (A \times P(K^n))
= A \times P_\infty$, where the projections $P(K^{n+1}) \to P(K^n)$
are of the form $P(\pi)$ with $\pi$ the standard projection $K^{n+1}
\to K^n$. Rather than as a set of $K$-valued points, we will regard $A
\times P_\infty$ as a reduced, affine $K$-scheme, namely, the spectrum
of the ring $K[A] \otimes_K R$, where $R$ is the symmetric algebra of
the countable-dimensional vector space $\lim_{n \to \infty} P(K^n)^*$.
The group $\GL:=\bigcup_{n=0}^\infty \GL_n(K)$ acts on $A \times
P_\infty$ by means of automorphisms and $A \times P_\infty$ is a {\em
$\GL$-variety} in the sense of \cite{Bik21}. More generally, if $X$ is a closed
subset of a polynomial functor, then the inverse limit $X_\infty$ of all
$X(K^n)$ is a $\GL$-variety. The association $X \mapsto X_\infty$ is an
equivalence of categories with the category of affine $\GL$-varieties,
which sends a morphism $\alpha:X \to Y$ to a $\GL$-equivariant morphism
$\alpha_\infty:X_\infty \to Y_\infty$ of affine schemes over $K$.

Let $\alpha:A \times P \to B \times Q$ and 
$\alpha':A' \times P' \to B \times Q$
be morphisms as in Proposition~\ref{prop:Cert}. Then the following two statements are equivalent:
\begin{enumerate}
\item $\overline{\im(\alpha_\infty)} \supseteq
\im(\alpha'_\infty)$ and 
\item $\overline{\im(\alpha_V)} \supseteq \im(\alpha'_V)$ for all $V
\in \Vec$. 
\end{enumerate}
It is (2) which we want to certify in
$\cert(B;Q;A;P;\alpha;A',P',\alpha')$. From now on, we assume that $A,A'$
are irreducible---in the procedure $\cert$ we will reduce to this case.

Now \cite{Bik22} contains the following useful criterion for (1).
Let $a'$ be the generic point of $A'$ and let $p' \in P'_\infty(K)$ be a
point whose $\GL$-orbit is dense in $P_{\infty}$ (such points exist, see
\cite{Bik21}). Let $x:=\alpha'_\infty(a',p')$; this is an $\Omega$-point of
$B \times Q_\infty=:X_\infty$, where $\Omega=K(A')$, and the $\GL$-orbit of
$x$ is dense in $\im(\alpha'_\infty)$. Write $Y_\infty:=A \times P_\infty$.

\begin{thm} \label{thm:Limit}
We have $\im(\alpha'_\infty) \subseteq \overline{\im(\alpha_\infty)}$
if and only if there exists a finite-dimensional field extension $\widetilde{\Omega}$ of $\Omega$
and a bounded $\widetilde{\Omega}((t))$-point $y(t)$ of $Y_\infty$ such that $\lim_{t \to
0} \alpha_\infty(y(t))=x$.
\end{thm}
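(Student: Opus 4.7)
The plan is to prove the two implications separately, with the reverse implication being the main content.

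For the ``if'' direction, I would start from a bounded $\widetilde{\Omega}((t))$-point $y(t)$ of $Y_\infty$ with $\lim_{t \to 0} \alpha_\infty(y(t)) = x$. Boundedness will ensure that the image $\alpha_\infty(y(t))$, a priori only a $\widetilde{\Omega}((t))$-point of $X_\infty$, extends to a $\widetilde{\Omega}[[t]]$-point, and the limit condition forces its special fiber to be (the base change of) $x$. This displays $x$ as a specialization of a generic point lying in $\im(\alpha_\infty)$, so that $x \in \overline{\im(\alpha_\infty)}$ scheme-theoretically. Since the latter is closed and $\GL$-stable and $\overline{\GL \cdot x} = \overline{\im(\alpha'_\infty)}$, the containment follows.

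For the converse, my plan is to manufacture the curve starting only from $x \in \overline{\im(\alpha_\infty)}$. The natural temptation is to invoke a valuative criterion directly, but $Y_\infty$ is not of finite type, so I would first truncate: for each $n$, consider the finite-dimensional morphism $\alpha_n:A \times P(K^n) \to X(K^n)$ together with the projection $x_n$ of $x$, and apply the classical DVR lifting at each finite level. This yields, after passing to a finite extension $\widetilde{\Omega}_n$ of $\Omega$, a curve $y_n(t)$ in $A \times P(K^n)$ over $\widetilde{\Omega}_n((t))$ whose $\alpha_n$-image has limit $x_n$. The remaining task is to assemble these into a single $Y_\infty$-point, and there are two compatibility issues at once: the $y_n$ must agree under the transition maps $A \times P(K^{n+1}) \to A \times P(K^n)$, and the pole orders in $t$ must be bounded independently of $n$.

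The resolution I would pursue is exactly why the introduction invokes Greenberg's approximation theorem and its effective version from \cite{Rond18}: Greenberg converts approximate solutions modulo high powers of $t$ into genuine formal solutions, while the effective version supplies a uniform cap on pole orders, depending only on the defining equations and not on $n$. Combined with the $\GL$-isotypic decomposition of the coordinate ring of $P_\infty$, which cuts each graded piece into finite-dimensional pieces, this should let me perform the inverse limit of the $y_n$ consistently over a single finite extension $\widetilde{\Omega}/\Omega$, obtaining one bounded $\widetilde{\Omega}((t))$-point $y(t)$; the identity $\lim_{t\to 0}\alpha_\infty(y(t))=x$ then holds level by level by construction. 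The main obstacle, and what I would expect to be the heart of the argument in \cite{Bik22}, is producing a \emph{uniform} pole-order bound as $n$ grows without destroying compatibility: the Greenberg constants need to be controlled along the whole tower $(A \times P(K^n))_n$ simultaneously, which is where the polynomial-functor structure (rather than an arbitrary inverse system) becomes essential.
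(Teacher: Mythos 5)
The paper does not give a self-contained proof of Theorem~\ref{thm:Limit}: it is cited as following from \cite[Theorem 5.8]{Bik22} and its proof, and this paper only spells out what ``bounded'' means and why that hypothesis makes $\alpha_\infty(y(t))$ a well-defined $\widetilde{\Omega}[[t]]$-point. Your ``if'' direction is fine and matches the remark the paper does make: boundedness gives a $\widetilde{\Omega}[[t]]$-point of $\overline{\im(\alpha_\infty)}$ with special fibre $x$, and density of the $\GL$-orbit of $x$ in $\im(\alpha'_\infty)$ together with $\GL$-stability of the closed set $\overline{\im(\alpha_\infty)}$ gives the desired containment.

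Your ``only if'' direction has a genuine gap, and it sits exactly where you already locate the difficulty. You propose to run a valuative/curve-lifting argument at each finite level $n$ and then assemble via Greenberg, claiming that the effective version of Greenberg from \cite{Rond18} ``supplies a uniform cap on pole orders, depending only on the defining equations and not on $n$.'' That claim is false as stated: the constants $N_0,c,s$ in Greenberg's theorem depend on the system of equations, and in your scheme the system of equations at level $n$ involves the coordinates of $P(K^n)$, whose number grows with $n$. The effective version makes the constants \emph{computable from the given system}, not \emph{uniform across the tower of systems}, so it does not by itself give a lower bound on $t$-exponents independent of $n$, nor does it enforce compatibility of the curves $y_n(t)$ under the projections $A\times P(K^{n+1})\to A\times P(K^n)$. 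Producing a single $\widetilde{\Omega}((t))$-point of $Y_\infty$ requires precisely that compatible, uniformly bounded system, and neither truncation nor Greenberg supplies it; the $\GL$-isotypic decomposition you mention would have to carry the entire weight, and you do not say how.

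Two further clarifications. First, you have misread where Greenberg enters the paper: Theorem~\ref{thm:Greenberg} and Corollary~\ref{cor:Bound} are invoked in \S\ref{sec:Certify} to make the \emph{search} for $y(t)$ a finite computation in the procedure $\cert$ (replacing condition (i) by its truncation modulo $t^{N_1}$), not to prove Theorem~\ref{thm:Limit}. Second, the actual argument in \cite{Bik22} does not go by levelwise truncation and reassembly; it uses the structure theory of $\GL$-varieties from \cite{Bik21} (shifting, and the existence of a dense $\GL$-orbit point $p'$) to reduce the lifting problem to an essentially finite-dimensional one in a single stroke, which is how the uniform bound is obtained. Your Proposition~\ref{prop:Easier}-style reduction hints at this, but it is a consequence of Theorem~\ref{thm:Limit} as used in the paper, not a step in its proof.
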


This follows from \cite[Theorem 5.8]{Bik22} and its proof. Here ``bounded''
means that the exponents of $t$ in the countably many coordinates of the
$P_\infty$-component of $y(t)$ are
uniformly bounded from below. This ensures that $\alpha_\infty(y(t))$
is a well-defined $\widetilde{\Omega}((t))$-point of $X_\infty$. The theorem says that
we can choose $y(t)$ such that $\alpha_\infty(y(t))$ is in fact an
$\widetilde{\Omega}[[t]]$-point of $X_\infty$ and that setting $t$ to zero yields $x$.

The procedure $\cert$ should certify the existence of $y(t)$. To this
end, we will narrow down the space in which to search for $y(t)$ to an
increasing chain of finite-dimensional varieties. First, we will show
that $y(t)$ needs not use more of $P_\infty$ than can be obtained by
applying morphisms to $p'$. To do so (see Proposition~\ref{prop:Easier}
below), we now introduce {\em systems of variables} in Schur
functors.

\subsection{Systems of variables in Schur functors}
\label{ssec:Systems}

Fix a field extension $\Omega$ of $K$.  For every nonempty partition
$\lambda$, $S_{\lambda,\infty}$ is an affine scheme over $K$ whose
$\Omega$-valued points form an $\Omega$-vector space of uncountable
dimension. Let $V_\lambda \subseteq S_{\lambda,\infty}(\Omega)$
be the set of all points $s$ for which there exist $k$, partitions
$\mu_1,\ldots,\mu_k$ with $0<|\mu_i|<|\lambda|$, an $\Omega$-valued point
$\alpha$ of $\Map(S_{\mu_1} \oplus \cdots \oplus S_{\mu_k},S_\lambda)$,
and an $\Omega$-valued point $p$ of $S_{\mu_1,\infty} \times \cdots
\times S_{\mu_k,\infty}$ such that $\alpha_\infty(p)=s$.  (Recall the
definition of $\Map(P',P)$ from Remark~\ref{re:Map}.)

\begin{ex}
If $\lambda=(2)$, then $S_{\lambda,\infty}(\Omega)$ is the space of
infinite-by-infinite symmetric matrices with entries in $\Omega$, and
$V_\lambda$ is the subspace of matrices of finite rank.
\end{ex}

Now $V_\lambda$ is a proper $\Omega$-vector subspace of
$S_{\lambda,\infty}(\Omega)$, and we choose any $\Omega$-basis
$(\xi_{\lambda,i})_{i \in I_\lambda}$ of a vector space complement to
$V_\lambda$ in $S_{\lambda,\infty}(\Omega)$, where $I_\lambda$ is an
uncountable index set. We call the $\xi_{\lambda,i}$ {\em variables}. We
choose these variables for every $\lambda$ and write $\xi$ for the
resulting uncountable tuple; $\xi$ is what we call a {\em system of
variables} (over $\Omega$) for all Schur functors. If $f$ is an $\Omega$-valued point
of $\Map(S_{\mu_1} \oplus \cdots \oplus S_{\mu_k},S_{\lambda})$ and we
fix indices $i_1 \in I_{\mu_1},\ldots,i_k \in I_{\mu_k}$, then we will
write $f(\xi)$ for $f(\xi_{\mu_1,i_1},\ldots,\xi_{\mu_k,i_k})$. This
is slight abuse of notation, since it is not apparent from the formula
$f(\xi)$ {\em which} indices were chosen, but the notation is compatible
with the notation $f(x)$ for a polynomial that uses finitely many of an
uncountable set of variables $x$.

\begin{re}
We need to fix the extension $\Omega$ first and then choose the system
of variables. It is not true that a system of variables chosen over
$K$ is also a system of variables over field extensions of $K$, as $S_{\lambda,\infty}(K)\otimes_{K} \Omega = S_{\lambda,\infty}(\Omega)$ only when $\Omega$ is finite-dimensional over $K$. 
\end{re}

The following proposition follows readily from the material in \cite{Bik21}. 

\begin{prop} \label{prop:DenseOrbit}
Let $S=S_{\mu_1} \oplus \cdots \oplus S_{\mu_k}$ be a pure polynomial
functor, and let $s=(s_1,\ldots,s_k) \in S_\infty(\Omega)$. Then the
$s_i$ can be chosen as part of a system of variables if and only if the
$\GL$-orbit of $s$ is dense in $S_\infty(\Omega)$.
\end{prop}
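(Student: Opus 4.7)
The plan is to translate the system-of-variables condition into a linear-algebraic statement and then match it with the density of the $\GL$-orbit via the classification of closed $\GL$-subvarieties of $S_\infty$ from \cite{Bik21}. First, I would unpack the definition: the $s_i$ can be chosen as part of a system of variables if and only if, for each partition $\lambda$, the set $\{s_i : \mu_i = \lambda\}$ is $\Omega$-linearly independent in $S_{\lambda,\infty}(\Omega)/V_\lambda$. This is because any linearly independent subset of the quotient extends to a basis of a complement of $V_\lambda$ in $S_{\lambda,\infty}(\Omega)$, and conversely any subset of such a basis is linearly independent modulo $V_\lambda$.

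For the easier half of the equivalence, I would show that linear dependence modulo $V_\lambda$ implies that the $\GL$-orbit of $s$ is not dense. Assume that for some $\lambda$, some $i_0$ with $\mu_{i_0} = \lambda$, scalars $c_i \in \Omega$, an $\Omega$-valued point $\alpha$ of $\Map(\bigoplus_j S_{\nu_j}, S_\lambda)$ with $|\nu_j| < |\lambda|$, and a $p \in \bigoplus_j S_{\nu_j,\infty}(\Omega)$, one has $s_{i_0} = \sum_{i \neq i_0,\ \mu_i = \lambda} c_i s_i + \alpha_\infty(p)$. I would then assemble a morphism $\gamma : T \to S$ with $T := \bigoplus_{i \neq i_0} S_{\mu_i} \oplus \bigoplus_j S_{\nu_j}$, defined as the identity on each retained copy of $S_{\mu_i}$ and as the prescribed linear combination plus $\alpha$ into the $i_0$-th component. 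By construction $s \in \im(\gamma_\infty)$. Since $T$ is strictly smaller than $S$ in the order from \S\ref{ssec:Order}---the largest degree where $T_e$ and $S_e$ differ is $e = |\lambda|$, and there $T_e$ is obtained from $S_e$ by removing one copy of $S_\lambda$---the structural results of \cite{Bik21} imply that the image closure of $\gamma_\infty$ is a proper closed $\GL$-subvariety of $S_\infty$, so $\GL \cdot s$ is not dense.

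For the reverse implication, I would argue contrapositively: if $\GL \cdot s$ is not dense, then by \cite{Bik21} the orbit closure $\overline{\GL \cdot s}$ is the image closure of a morphism $\beta : T \to S$ with $T$ strictly smaller than $S$. At the largest degree $e$ where $T_e$ and $S_e$ differ, $T_e$ is a proper quotient of $S_e$, so some irreducible summand $S_\lambda$ of $S_e$ appears with strictly smaller multiplicity in $T_e$ than in $S_e$; pick a corresponding $i_0$. Using the Friedlander-Suslin lemma combined with Schur's lemma to analyse the $i_0$-th component of $\beta$, the $\GL$-equivariant contributions to this component come from (i)~$\Omega$-scalar multiples of the other copies of $S_\lambda$ in $T$ and (ii)~strictly smaller Schur functors in $T$, the latter landing in $V_\lambda$. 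Reading off the resulting identity for the $i_0$-th coordinate of $s$ then yields the desired nontrivial $\Omega$-linear relation among the $s_i$ with $\mu_i = \lambda$ modulo $V_\lambda$.

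The main obstacle is the reverse direction: one must extract the relation witnessed by $s$ itself, not merely by a generic point of the image closure of $\beta$. This requires the refined version of the structural theorem of \cite{Bik21} that realises $\overline{\GL \cdot s}$ as the image closure of a morphism whose source polynomial functor is determined by $s$, together with the (nontrivial) fact that $V_\lambda$ is a proper subspace of $S_{\lambda,\infty}(\Omega)$, which is what makes the notion of a system of variables well-defined in the first place.
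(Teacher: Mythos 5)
The paper does not actually prove this proposition: it is stated with the one-line justification ``The following proposition follows readily from the material in \cite{Bik21},'' so there is no in-paper proof to compare against. I will therefore assess your sketch on its own.

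Your opening reduction is correct and is the right way to start: being extendable to a system of variables is equivalent to requiring, for each $\lambda$, that $\{s_i : \mu_i = \lambda\}$ map to a linearly independent subset of $S_{\lambda,\infty}(\Omega)/V_\lambda$. Your easy direction is also essentially sound: a linear dependence modulo $V_\lambda$ lets you exhibit $s$ in the image of $\gamma_\infty$ for a morphism $\gamma:T\to S$ with $T$ strictly smaller in the order of \S\ref{ssec:Order}, and the fact that such a morphism cannot be dominant into $S_\infty$ does follow from the structure theory of \cite{Bik21} (and ultimately from Noetherianity). Note, though, that both directions quietly lean on the nontrivial fact that $V_\lambda$ is a proper $\Omega$-subspace of $S_{\lambda,\infty}(\Omega)$ --- without that, the whole statement is vacuous --- and this proper-subspace statement is itself a consequence of the deep results you are citing rather than an input to them; it would be worth making that logical order explicit.

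The gap you flag in the reverse direction is genuine, and it is the crux. Knowing that $\overline{\GL\cdot s}$ is the image closure of some $\beta:A\times T\to S$ with $T<S$ does not by itself give you an expression for the coordinates of $s$, only for points that actually lie in the image of $\beta_\infty$. To close this you need two further ingredients from \cite{Bik21}: a Chevalley-type theorem asserting that the image of $\beta_\infty$ is $\GL$-constructible, hence contains a nonempty $\GL$-stable open subset of $\overline{\GL\cdot s}$; and the observation that $s$, having dense $\GL$-orbit in $\overline{\GL\cdot s}$, must lie in every such open subset, so $s$ itself is hit by $\beta_\infty$ after a finite extension of $\Omega$. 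Even then, the Schur-lemma reading-off step produces a relation among the $s_i$ with coefficients in that finite extension $\widetilde{\Omega}$ and modulo $V_\lambda^{\widetilde{\Omega}}$, and you still need to descend this to a relation over $\Omega$ modulo $V_\lambda$; this descent is not automatic because $V_\lambda$ is defined using $\Omega$-valued morphisms and $V_\lambda^{\widetilde{\Omega}}$ can a priori be larger than $V_\lambda\otimes_\Omega\widetilde{\Omega}$. Your proposal correctly identifies where the argument becomes nontrivial, but as written it does not yet constitute a proof of the harder implication; you would need to quote the precise constructibility and genericity statements from \cite{Bik21} and handle the base-change issue explicitly.
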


The following theorem expresses that the variables in a system are
independent and generate all vectors in all Schur functors.

\begin{thm} \label{thm:Variables}
Fix a field extension $\Omega$ of $K$ and a system of
variables $\xi$ over $\Omega$ for all Schur functors. Then for every nonempty partition $\lambda$
and any $p \in S_{\lambda,\infty}(\Omega)$,
there exist partitions $\lambda_1,\ldots,\lambda_k$ and an
$\Omega$-valued point $f$ of $\Map(S_{\lambda_1} \oplus \cdots
\oplus S_{\lambda_k}, S_\lambda)$ and variables $\xi_{\lambda_j,i_j}$
for $j=1,\ldots,k$ and $i_j \in I_{\lambda_j}$ such that
$p=f_\infty(\xi_{\lambda_1,i_1},\ldots,\xi_{\lambda_k,i_k})$. Moreover, if $f$ really
depends on all $\xi_{\lambda_j,i_j}$ in the sense that replacing one of
them by zero changes the outcome, then, up to permutations of $\{1,\ldots,k\}$,
the partitions $\lambda_j$, the variables $\xi_{\lambda_j,i_j}$, and $f$ 
are unique.
\end{thm}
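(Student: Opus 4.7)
The plan is to prove both parts by induction on $|\lambda|$, handling existence via an unwinding of the definition of $V_\lambda$ and uniqueness by peeling off the top-size variables first using a degree count.

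\emph{Existence.} In the base case $|\lambda|=1$, no partition has positive size strictly less than $1$, so $V_\lambda=0$ and the $\xi_{(1),i}$ form an $\Omega$-basis of $S_{(1),\infty}(\Omega)$; every element is then a finite $\Omega$-linear combination, which is trivially of the required form. For $|\lambda|\geq 2$ I would use the splitting $S_{\lambda,\infty}(\Omega)=V_\lambda\oplus\bigoplus_i \Omega\cdot\xi_{\lambda,i}$ to write $p=v+\sum_m c_m\xi_{\lambda,i_m}$ with $v\in V_\lambda$. By the very definition of $V_\lambda$, $v=\alpha_\infty(q_1,\ldots,q_r)$ for some $\alpha$ in $\Map(S_{\mu_1}\oplus\cdots\oplus S_{\mu_r},S_\lambda)(\Omega)$ with each $|\mu_i|<|\lambda|$ and $q_i\in S_{\mu_i,\infty}(\Omega)$. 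The inductive hypothesis expresses each $q_i$ as an $\Omega$-valued morphism evaluated on variables; substituting these into $\alpha$ and combining with the linear $\xi_{\lambda,i_m}$-part yields the desired expression for $p$.

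\emph{Uniqueness, top-size variables.} Given any expression $p=f_\infty(\xi_{\lambda_1,i_1},\ldots,\xi_{\lambda_k,i_k})$, a degree count in the grading of \S\ref{ssec:Gradings} shows that any monomial of $f$ of total polynomial degree $|\lambda|$ involving a coordinate on $S_{\lambda_j}$ with $|\lambda_j|=|\lambda|$ must be linear in that coordinate and involve no other non-constant input. Combined with Schur's lemma applied through the Friedlander--Suslin equivalence (Lemma~\ref{lm:FS}), this forces a decomposition $f=f^{\mathrm{top}}+f^{\mathrm{small}}$, where $f^{\mathrm{top}}$ is a linear combination with scalars $c_j\in\Omega$ of the identity maps on those $S_{\lambda_j}$ with $\lambda_j=\lambda$, and $f^{\mathrm{small}}$ depends only on the strictly smaller-size inputs and has image inside $V_\lambda$. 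Reducing modulo $V_\lambda$ gives $p \bmod V_\lambda=\sum_{j:\lambda_j=\lambda} c_j\xi_{\lambda,i_j}$, and since the $\xi_{\lambda,i}$ form an $\Omega$-basis of a complement of $V_\lambda$ inside $S_{\lambda,\infty}(\Omega)$, both the multiset of top-size variables and their coefficients are uniquely determined by $p$.

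\emph{Uniqueness of the remaining data.} Subtracting $f^{\mathrm{top}}$ leaves an element of $V_\lambda$ represented via $f^{\mathrm{small}}$ applied to variables of strictly smaller size, i.e.\ it matches the description of an element of $V_\lambda$ as $\alpha_\infty(q_1,\ldots,q_r)$ with $|\mu_i|<|\lambda|$. Applying the inductive hypothesis to each $q_i$ produces a unique multiset of smaller-size variables together with a unique morphism producing each $q_i$, and these assemble into the remaining data for $f$. Once the input tuple is fixed, the morphism $f$ is itself uniquely determined: by Proposition~\ref{prop:DenseOrbit} the tuple $(\xi_{\lambda_1,i_1},\ldots,\xi_{\lambda_k,i_k})$ has dense $\GL$-orbit in $\prod_j S_{\lambda_j,\infty}(\Omega)$, and two morphisms $\bigoplus_j S_{\lambda_j}\to S_\lambda$ of polynomial functors that agree on such a dense orbit must coincide. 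The hard part, in my view, is the passage from the clean top-size identification to genuine uniqueness of the smaller data: the same residue in $V_\lambda$ admits many representations through morphisms from smaller Schur functors, and the minimality hypothesis that ``$f$ depends on all its inputs'' must be propagated through the recursive unfolding to exclude spurious reparameterisations---such as merging repeated variables or splitting one variable into several---at each induction step. Combining minimality with the orbit-density characterisation should suffice, but the combinatorial bookkeeping is delicate.
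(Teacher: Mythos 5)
Your existence argument matches the paper's: decompose $p$ as a linear combination of $\xi_{\lambda,i}$ plus a remainder in $V_\lambda$, unwind the definition of $V_\lambda$, and apply the induction hypothesis to the smaller-size components. That part is fine.

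The uniqueness argument is where you diverge from the paper, and there is a real gap in your inductive step. You correctly peel off the top-size data: Schur's lemma plus the degree count pin down, for each $\lambda_j=\lambda$, a scalar multiple of the identity, and reduction modulo $V_\lambda$ fixes the coefficients $c_j$ and the top-size variables uniquely. But when you then ``apply the inductive hypothesis to each $q_i$'', you are silently treating the tuple $(\alpha;q_1,\ldots,q_r)$ witnessing membership in $V_\lambda$ as if it were canonical. It is not: the same residue in $V_\lambda$ has infinitely many presentations $\alpha_\infty(q)$, with different numbers of factors, different partitions $\mu_i$, and different points $q_i$. The induction hypothesis gives a unique variable-presentation of each $q_i$, but that does nothing toward showing two different choices of $(\alpha;q)$ lead to the same final data. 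You flag exactly this concern yourself, but ``combining minimality with orbit-density should suffice'' is a hope, not an argument; the combinatorial reconciliation of two presentations of the residue never happens.

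What is interesting is that the paper sidesteps all of this. It never isolates the top-size part and never inducts on the residue. Instead it reduces the whole uniqueness claim to a single statement: a nonzero $\Omega$-valued point $f$ of $\Map(S_{\mu_1}\oplus\cdots\oplus S_{\mu_k},S_\lambda)$, evaluated at distinct variables $\xi_{\mu_1,i_1},\ldots,\xi_{\mu_k,i_k}$, cannot give $0$. That is immediate from Proposition~\ref{prop:DenseOrbit} (the tuple has dense $\GL$-orbit) together with $\GL$-equivariance of $f_\infty$. Given two minimal expressions of $p$, form the union of their variable lists and consider the difference of the two morphisms on that larger source: it evaluates to zero, hence is the zero morphism, and ``really depends on'' then forces both lists and morphisms to coincide. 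You state this dense-orbit fact at the very end of your argument but only use it to identify $f$ after the variables are fixed; it in fact carries the whole uniqueness proof at once, without any top-size peeling or recursive unwinding.
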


\begin{proof}
The existence of $f$ follows by induction on $|\mu|$: we may 
write $p$ 
\[ p=c_1 \xi_{\lambda,i_1} + c_2 \xi_{\lambda,i_2} + \cdots + c_l
\xi_{\lambda,i_l} + \tilde{p} \]
with $\tilde{p} \in V_\lambda$ and (unique) $i_1,\ldots,i_l \in I_\lambda$
and $c_1,\ldots,c_l \in \Omega$. Now $\tilde{p}=\alpha_\infty(q)$,
for a suitable $\alpha$, where $q$ is an $\Omega$-valued point
of $S_{\mu_1,\infty} \times \cdots \times S_{\mu_k,\infty}$
with $0<|\mu_i|<|\lambda|$ for all $i$.  By the induction hypothesis, the
components $q_i,i=1,\ldots,k$ of $q$ are of the form $f_{i,\infty}(\xi)$,
and then $p$ equals
\[ \alpha_\infty(f_{1,\infty}(\xi),\ldots,f_{k,\infty}(\xi))
+ \sum_{i=1}^l c_i \xi_{\lambda,i}, \]
so that $f:=\alpha(f_1,\ldots,f_k) + \sum_{i=1}^l c_i \id_{S_\lambda}$
does the trick for the obvious choice of variables. 

For uniqueness, it suffices to show that if $f$ is a nonzero
$\Omega$-valued point of $\Map(S_{\mu_1} \oplus \ldots \oplus
S_{\mu_k},S_\lambda)$ and $\xi_{\mu_1,i_1},\ldots,\xi_{\mu_k,i_k}$
are distinct variables, then $f_\infty(\xi) \neq 0$. Indeed,
by Proposition~\ref{prop:DenseOrbit}, the $\GL$-orbit of
$(\xi_{\mu_1,1},\ldots,\xi_{\mu_k,i_k})$ is dense, and $f_\infty$
is $\GL$-equivariant, so that $f_\infty(\xi)=0$ implies that $f=0$,
a contradiction.
\end{proof}

\begin{re}
Our notion of systems of variables is inspired by \cite{Erman18}.  Indeed,
If we restrict a system of variables in Schur functors to the symmetric
powers $S^d$ with $d=0,1,\ldots$, then the proof of \cite[Theorem
1.1]{Erman18} shows that these are algebraically independent (over
$\Omega$) generators of the ring $\bigoplus_d S^d_\infty(\Omega)$,
and that this ring is therefore abstractly isomorphic to a polynomial
ring. This fact can also be deduced from Theorem~\ref{thm:Variables}
above.
\end{re}

\subsection{Narrowing down the search for $y(t)$}

In this section, we retain the notation from \S\ref{ssec:InfDim}. The
following diagram represents the situation:
\[ 
\xymatrix{
\ & (a',p') \in A' \times P'_\infty \ar[d]^{\alpha'_\infty} \\
y(t)=(a(t),p(t)) \in (A \times P_\infty)(\widetilde{\Omega}((t)))
\ar[r]^{\quad \quad \alpha_\infty} & x=(b,q) \in (B \times Q_\infty)(\Omega) 
}
\]
where $\Omega=K(A')$, $a'$ is the generic point of $A'$, $p'$ is a
$K$-point of $P_\infty$ with dense $\GL$-orbit, and we want to
certify the existence of $y(t)$, defined over a finite extension
$\widetilde{\Omega}$ of $\Omega$, such that $\lim_{t \to 0}
\alpha_\infty(y(t))=x$. 

\begin{prop} \label{prop:Easier}
If a $y(t)$ as in Theorem~\ref{thm:Limit} exists, then it can be chosen of
the form $(a(t),\gamma(t)_\infty(p'))$ with $a(t) \in A(\widetilde{\Omega}((t)))$ and
$\gamma(t)$ a $\widetilde{\Omega}[t,t^{-1}]$-valued point of the
finite-dimensional affine space $\Map(P',P)$. 
\end{prop}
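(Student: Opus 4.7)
The plan proceeds in two stages: (i) replace $p(t)$ by $\gamma(t)_\infty(p')$ for some $\gamma(t) \in \Map(P',P)(\widetilde{\Omega}((t)))$, and (ii) truncate $\gamma(t)$ to a Laurent polynomial while preserving the limit condition.

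For stage (i), since $p'$ has dense $\GL$-orbit in $P'_\infty$, Proposition~\ref{prop:DenseOrbit} lets us extend its components to a system of variables over $\widetilde{\Omega}((t))$. Applying Theorem~\ref{thm:Variables} to $p(t)$ gives a presentation $p(t) = g(t)_\infty(p',\eta)$, where $\eta$ is a tuple of additional variables lying in some pure polynomial functor $E$ and $g(t) \in \Map(P' \oplus E, P)(\widetilde{\Omega}((t)))$. Decompose $g(t) = \gamma(t) + g_{>0}(t)$ by $\eta$-degree, with $\gamma(t) \in \Map(P',P)(\widetilde{\Omega}((t)))$, and set $\tilde{y}(t):=(a(t),\gamma(t)_\infty(p'))$. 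To show $\lim_{t\to 0}\alpha_\infty(\tilde{y}(t))=x$, introduce $\tilde{g}(t):= \alpha^{(1)}(a(t),\cdot)\circ g(t) \in \Map(P'\oplus E,Q)(\widetilde{\Omega}((t)))$, so that $\tilde{g}(t)_\infty(p',\eta) = \alpha^{(1)}_\infty(y(t)) \in Q_\infty(\widetilde{\Omega}[[t]])$ with value $q$ mod $t$. Writing $\tilde{g}(t)=\sum_m \tilde{g}_m t^m$ coefficient-wise, the $t^m$-coefficient of $\tilde{g}(t)_\infty(p',\eta)$ equals $\tilde{g}_{m,\infty}(p',\eta)$; by the injectivity-at-variables statement in the proof of Theorem~\ref{thm:Variables}, its vanishing for $m<0$ forces $\tilde{g}_m=0$, so $\tilde{g}(t) \in \Map(P'\oplus E,Q)(\widetilde{\Omega}[[t]])$. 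Reducing modulo $t$ and comparing $\eta$-graded pieces of $[\tilde{g}(t) \text{ mod } t]_\infty(p',\eta) = q$ (which has $\eta$-degree $0$, since $q = \alpha'^{(1)}(a',p')$ does not involve $\eta$), injectivity again forces $\tilde{g}(t)$ mod $t$ to have $\eta$-degree zero and evaluate at $p'$ to $q$. This is precisely $\tilde{\gamma}(t)$ mod $t$, where $\tilde{\gamma}(t) := \alpha^{(1)}(a(t),\cdot)\circ\gamma(t)$ is the $\eta$-degree-$0$ part of $\tilde{g}(t)$. Hence $\alpha^{(1)}_\infty(\tilde{y}(t)) = \tilde{\gamma}(t)_\infty(p') \in Q_\infty(\widetilde{\Omega}[[t]])$ with value $q$ mod $t$; combined with the unchanged condition on $a(t)$, this yields $\lim_{t\to 0}\alpha_\infty(\tilde{y}(t))=x$.

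For stage (ii), boundedness of $y(t)$ gives $\gamma(t)=\sum_{m\geq -N} c_m t^m$ with each $c_m \in \Map(P',P)(\widetilde{\Omega})$; set $\gamma^{(M)}(t):=\sum_{-N\leq m\leq M} c_m t^m \in \Map(P',P)(\widetilde{\Omega}[t,t^{-1}])$, so that $\gamma^{(M)}(t)_\infty(p')-\gamma(t)_\infty(p')$ has $t$-valuation at least $M+1$. Since $\alpha$ is polynomial of bounded degree and $a(t),\gamma(t)_\infty(p')$ have $t$-valuation bounded below, a Taylor-expansion estimate yields $\alpha_\infty(a(t),\gamma^{(M)}(t)_\infty(p')) - \alpha_\infty(a(t),\gamma(t)_\infty(p')) \in t^{M+1-L} X_\infty(\widetilde{\Omega}[[t]])$ for some fixed constant $L$, so for $M\geq L$ the limit still equals $x$. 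The main obstacle is the injectivity step in stage (i): one must force the cancellations of negative $t$-powers in $\tilde{g}(t)_\infty(p',\eta)$ to occur morphism-by-morphism in $\Map(P'\oplus E,Q)(\widetilde{\Omega})$ and, consequently, within each $\eta$-graded piece. This pinning of cancellations relies crucially on the uniqueness content of Theorem~\ref{thm:Variables}.
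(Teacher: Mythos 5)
Your two-stage plan (replace $p(t)$ by $\gamma(t)_\infty(p')$, then truncate $\gamma(t)$) is a sensible outline, and stage~(ii) works essentially as you state. However, stage~(i) has a genuine gap that you yourself flag as ``the main obstacle'' but do not resolve.

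The problem is the base field over which you invoke Theorem~\ref{thm:Variables}. You extend $p'$ to a system of variables over $\widetilde{\Omega}((t))$, and the auxiliary variables $\eta$ you obtain are therefore $\widetilde{\Omega}((t))$-valued points of $S_{\mu,\infty}$, not $\widetilde{\Omega}$-valued ones. Indeed, the Remark following Theorem~\ref{thm:Variables} explicitly warns that a system of variables over $\widetilde{\Omega}$ is \emph{not} a system of variables over the (infinite-degree) extension $\widetilde{\Omega}((t))$, so one cannot arrange $\eta$ to be $t$-independent. Your pivotal claim that ``the $t^m$-coefficient of $\tilde{g}(t)_\infty(p',\eta)$ equals $\tilde{g}_{m,\infty}(p',\eta)$'' is exactly the step that breaks: when $\eta$ depends on $t$, the $t$-expansion of $\tilde{g}(t)_\infty(p',\eta)$ mixes the $t$-expansion of $\tilde{g}(t)$ with that of $\eta$, so the $t^m$-coefficient is \emph{not} $\tilde{g}_{m,\infty}(p',\eta)$, and you cannot conclude $\tilde{g}_m=0$ for $m<0$. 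Concretely, with $P'=S^1=P=Q$, $\tilde g(t):=t^{-1}(\pi_1-\pi_2)$ and $\eta=p'+t\,v$, one has $\tilde g(t)_\infty(p',\eta)=-v\in Q_\infty(\widetilde{\Omega}[[t]])$ even though $\tilde g(t)$ has a pole at $t=0$. The uniqueness statement in Theorem~\ref{thm:Variables} does not ``pin the cancellations'' in the way you hope, precisely because there is no compatible $\widetilde{\Omega}[[t]]$-structure on the evaluation map when $\eta$ itself lives only over $\widetilde{\Omega}((t))$.

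The correct remedy is to reverse the order of your two stages. First truncate $p(t)$ to a finite Laurent polynomial $\sum_{d=m_1}^{m_2} t^d p_d$ with each $p_d\in P_\infty(\widetilde{\Omega})$ (this uses the boundedness of $y(t)$ and the bounded degree of $\alpha$, as in your stage~(ii)). Then choose the system of variables over $\widetilde{\Omega}$, not over $\widetilde{\Omega}((t))$, and apply Theorem~\ref{thm:Variables} to each $\widetilde{\Omega}$-valued coefficient $p_d$ separately, writing $p_d=f_{d,\infty}(\xi)$. Now the auxiliary variables are $\widetilde{\Omega}$-valued and $t$-independent, so the comparison of the limit $\lim_{t\to 0}\alpha^{(1)}_\infty(a(t),p(t))$ with $(\alpha')^{(1)}_\infty(p')$ takes place entirely in $\widetilde{\Omega}$-valued morphisms, and the uniqueness part of Theorem~\ref{thm:Variables} then legitimately forces the extra variables to be irrelevant; setting them to zero and taking $\gamma(t):=\sum_d t^d f_d(\cdot,0)$ completes the argument. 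This is the route the paper takes.
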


\begin{proof}
Write $y(t)=(a(t),p(t))$. First, terms in $p(t)$ of sufficiently high
degree in $t$ do not contribute to $\lim_{t \to 0} \alpha_\infty(a(t),p(t))$,
so we may truncate $p(t)$ and assume that it is a finite sum
$\sum_{d=m_1}^{m_2} t^d p_d$ where each $p_d \in P_\infty(\widetilde{\Omega})$.

Now write $P'=S_{\lambda_1} \oplus \cdots \oplus S_{\lambda_k}$,
where the $\lambda_i$ are partitions. Accordingly, decompose
$p'=(p'_1,\ldots,p'_k)$ with $p'_i \in S_{\lambda_i,\infty}$. Over the
field extension $\widetilde{\Omega}$ of $K$, choose a system of variables $\xi$
in such a manner that $p'_1,\ldots,p'_k$ are among these variables;
this can be done by Proposition~\ref{prop:DenseOrbit} because $\GL \cdot p'$ is dense in $P'_\infty$. Then,
by Theorem~\ref{thm:Variables}, we have $p_d=f_{d,\infty}(\xi)$ for
all $d$, where $f_d$ is an (essentially unique) morphism into $P$ with
coefficients in $\widetilde{\Omega}$.

Recall from Remark~\ref{re:Split} that $\alpha$ splits as $\alpha^{(0)}:A
\to B$ and $\alpha^{(1)}:A \times P \to Q$, and similarly for
$\alpha'$. The limit $\lim_{t \to 0} \alpha^{(1)}_\infty(a(t),p(t))
$ equals 
\[ g_\infty(p_{m_1},\ldots,p_{m_2}) = g_\infty(f_{m_1,\infty}(\xi),\ldots,
f_{m_2,\infty}(\xi)) \]
for some $\widetilde{\Omega}$-point $g$ of $\Map(P^{m_2-m_1+1},Q)$. On the other
hand, by the choice of $y(t)$, it equals $(\alpha')^{(1)}_\infty(p')$. In
the latter expression, only the variables $p'_1,\ldots,p'_k$
appear. By the uniquess statement in
Theorem~\ref{thm:Variables}, the same must apply to
$g_\infty(f_{m_1,\infty}(\xi),\ldots,f_{m_2,\infty}(\xi))$.

Therefore, replacing
each $p_d$ by $\tilde{p}_d:=f_{d,\infty}(p',0)$, where all variables not among
the variables $p_1',\ldots,p_k'$ are set to zero, yields a $\tilde{y}(t)$
with the same property as $y(t)$: $\lim_{t \to 0}
\alpha_\infty(\tilde{y}(t))=x$.
Now $\gamma(t):=\sum_d t^d f_d(\cdot,0)$ is the desired
$\widetilde{\Omega}[t,t^{-1}]$-valued point of $\Map(P',P)$. 
\end{proof}

Recall from Remark~\ref{re:Split} that $(\alpha')^{(1)}$ can be regarded
an $\Omega$-point of $\Map(P',Q)$. Similarly, $\alpha^{(1)}(a(t),\cdot)$
can be regarded an $\widetilde{\Omega}((t))$-point of $\Map(P,Q)$.

\begin{lm} \label{lm:Explicit}
A point $(a(t),\gamma(t)_\infty(p'))$ as in
Proposition~\ref{prop:Easier} satisfies the property $\lim_{t \to 0}
\alpha_\infty(a(t),\gamma(t)_\infty(p'))=\alpha'(a',p')=:(b,q) \in (B \times
Q_\infty)(\Omega)$ if and only if, first,
$ \lim_{t \to 0} \alpha^{(0)}(a(t))=b$ and, second, the $\widetilde{\Omega}((t))$-point
$\alpha^{(1)}(a(t),\cdot) \circ \gamma(t)$ of $\Map(P',Q)$
satisfies
\[ \lim_{t \to 0} \alpha^{(1)}(a(t),\cdot) \circ \gamma(t) =
(\alpha')^{(1)};
\]
an equality of $\widetilde{\Omega}$-points in $\Map(P',Q)$. 
\end{lm}

\begin{proof}
The statement ``if'' is immediate, by substituting $p'$; and the statement
``only if'' follows from the fact that the $\GL$-orbit of $p'$ is dense
in $P'_\infty$.
\end{proof}

\subsection{Greenberg's approximation theorem}

We have almost arrived at a countable chain of finite-dimensional
varieties in which we can look for $y(t)$. The only problem is that
the point $a(t) \in A(\widetilde{\Omega}((t)))$ does not yet have a finite
representation. For concreteness, assume that $A$ is given by a prime
ideal $I=(f_1,\ldots,f_r)$ in $K[x_1,\ldots,x_m]$, $B$ is embedded in
$K^n$, and $\alpha^{(0)}:A \to B$ is the restriction of some polynomial
map $\alpha^{(0)}: K^m \to K^n$.

Then $a(t)$ is an $m$-tuple in $\widetilde{\Omega}((t))^m$, and
together with $\gamma(t)$ it is required to satisfy the 
following properties from Lemma~\ref{lm:Explicit}:
\begin{enumerate}
\item[(i)] $f_i(a(t))=0$ for $i=1,\ldots,r$;
\item[(ii)] $\lim_{t \to 0} \alpha^{(0)}(a(t))=b$;
\item[(iii)] and $\lim_{t \to 0} \alpha^{(1)}(a(t),\cdot) \circ
\gamma(t) = (\alpha')^{(1)}$. 
\end{enumerate}
Suppose that we fix a lower bound $-d_1$, with $d_1 \in
\ZZ_{\geq 0}$,
on the exponents of $t$ appearing in $a(t)$ or in $\gamma(t)$. From
the data of $\alpha$ and $d_1$, one can compute a bound $d_2 \in
\ZZ_{\geq 0}$ such that the validity of (ii) and (iii) do not depend on
the terms in $a(t)$ or $\gamma(t)$ with exponents $>d_2$.
However, (i)
does depend on all (typically infinitely many) terms of $a(t)$. Here Greenberg's
approximation theorem comes to the rescue. As this 
theorem requires formal power series rather than Laurent series,
we put $\tilde{a}(t):=t^{d_1} a(t)$. Accordingly, replace each $f_i$
by $\tilde{f}_i:=t^e f_i(t^{-d_1}x_1,\ldots,t^{-d_1}x_n)$ where $e$
is large enough such that all coefficients of $\tilde{f}_i$ for all $i$
are in $\widetilde{\Omega}[[t]]$. Note that $a(t)$ is a root of all $f_i$ if and
only if $\tilde{a}(t)$ is a root of all $\tilde{f}_i$.

\begin{thm}[Greenberg, \cite{Greenberg66}]\label{thm:Greenberg}
There exists numbers $N_0 \geq 1,c \geq 1,s \geq 0$ such that for
all $N \geq N_0$ and $\overline{a}(t) \in \widetilde{\Omega}[[t]]^n$ with
$\tilde{f}_i(\overline{a}(t)) \equiv 0 \mod t^N$ for all $i=1,\ldots,r$
there exists an $\tilde{a}(t) \in \widetilde{\Omega}[[t]]^n$ such that
$\tilde{a}(t) \equiv \overline{a}(t) \mod t^{\lceil \frac{N}{c} \rceil-s}$ and moreover
$f_i(\tilde{a}(t))=0$ for all $i$. Moreover, $N_0,c,s$ can be computed from
$\tilde{f}_1,\ldots,\tilde{f}_r$.
\end{thm}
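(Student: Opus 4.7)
The plan is to invoke the two results already mentioned in the excerpt essentially verbatim: Greenberg's original approximation theorem \cite{Greenberg66} for the existence of constants $N_0,c,s$, and Rond's effective refinement \cite{Rond18} for the assertion that these constants are algorithmically computable from the input polynomials.

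To set up the first step: Greenberg works in the setting of a Henselian excellent discrete valuation ring $R$ with maximal ideal $\mathfrak{m}$, and shows that for any finite system $g_1,\ldots,g_r \in R[x_1,\ldots,x_n]$ there exist constants $N_0, c, s$ depending only on the $g_i$ such that each approximate zero modulo $\mathfrak{m}^N$ with $N\geq N_0$ can be perturbed to a genuine zero agreeing with the approximate one modulo $\mathfrak{m}^{\lceil N/c\rceil-s}$. In our situation $R=\widetilde{\Omega}[[t]]$ is a complete (hence Henselian and excellent) discrete valuation ring with $\mathfrak{m}=(t)$, and $g_i:=\tilde{f}_i$ is exactly the system appearing in the statement, so the hypotheses of Greenberg's theorem are met directly and yield the existence part.

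For the computability assertion, we appeal to Rond's paper \cite{Rond18}, which reworks Greenberg's proof to produce explicit bounds on $N_0,c,s$ in terms of the total degrees of the $\tilde{f}_i$ in $x_1,\ldots,x_n$ and the $t$-adic orders of their coefficients, together with the number of variables $n$. All of these quantities are manifestly computable from the input $\tilde{f}_1,\ldots,\tilde{f}_r \in \widetilde{\Omega}[[t]][x_1,\ldots,x_n]$ (they involve only finitely many initial coefficients of each $\tilde{f}_i$), so $N_0,c,s$ can be computed as claimed.

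The main point to verify, and really the only place the argument asks for any care, is that Rond's effective bounds are insensitive to the specific coefficient field, since in our application that field is the finite extension $\widetilde{\Omega}$ of $\Omega=K(A')$ rather than $K$ itself. This is the case because the bounds in \cite{Rond18} depend only on the combinatorial shape of the system of polynomials (degrees, number of variables, orders of coefficients in $t$), not on the characteristic-zero field over which the power series are formed. Beyond this verification, there is no genuine obstacle: the entire argument is a direct citation of the two referenced theorems, which is precisely why \cite{Greenberg66,Rond18} are invoked in \S\ref{ssec:Structure} of the introduction as the tools enabling $y(t)$ to be represented in finite terms.
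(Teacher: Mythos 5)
Your proposal matches the paper's treatment: Theorem~\ref{thm:Greenberg} is presented as an externally cited result, with \cite{Greenberg66} supplying the existence of the constants and \cite{Rond18} supplying their computability, exactly the division of labour you invoke, and the paper gives no further proof beyond the remark that ``the computability \ldots\ is only implicit in \cite{Greenberg66}; it is made explicit in the overview paper \cite{Rond18}.'' One minor caveat: your assertion that Rond's effective constants depend \emph{only} on the total degrees of the $\tilde{f}_i$, the $t$-adic orders of their coefficients, and the number of variables overstates what those references give --- the constants also involve finer invariants of the ideal $(\tilde{f}_1,\ldots,\tilde{f}_r)$ that must be extracted algorithmically from the generators rather than read off from this ``combinatorial shape'' --- but this does not affect the conclusion, since the theorem asserts only computability from the $\tilde{f}_i$, which remains true.
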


As a matter of fact, the computability, which is crucial to our work,
is only implicit in \cite{Greenberg66}; it is made explicit in the overview paper \cite{Rond18}. 

\begin{cor} \label{cor:Bound}
There exist natural numbers $d_2,N_1$, which can be computed
from $d_1$ and $f_1,\ldots,f_r$, $\alpha$, such that the following
statements are equivalent:
\begin{enumerate}
\item a pair $a(t),\gamma(t)$ with properties (i)--(iii) exists that has
no exponents of $t$ smaller than $-d_1$;
\item a pair $a(t),\gamma(t)$ exists with all exponents
of $t$ in the interval $\{-d_1,\ldots,d_2\}$ that satisfies
(ii) and (iii), and that satisfies (i) modulo $t^{N_1}$.
\end{enumerate}
\end{cor}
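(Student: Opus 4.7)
The plan is to pin down what ``large enough'' means for both $d_2$ and $N_1$, then verify each direction by truncation on one side and an application of Greenberg's theorem on the other. First, I would identify a cutoff degree $D_{\mathrm{eff}}$ such that the validity of (ii) and (iii) depends only on the coefficients of $t^k$ in $a(t)$ and $\gamma(t)$ for $k \leq D_{\mathrm{eff}}$. If $\delta$ is an upper bound on the total degree of the polynomial expressions $\alpha^{(0)}(a(t))$ and $\alpha^{(1)}(a(t),\cdot)\circ\gamma(t)$ in the coefficients $a_k, \gamma_l$, then a straightforward counting argument shows that the coefficient of $t^j$ in either expression, for $j \leq 0$, only involves the $a_k, \gamma_l$ with exponents in $[-d_1, (\delta-1)d_1]$; so one can take $D_{\mathrm{eff}} := (\delta-1)d_1$.

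Next, I would invoke the effective form of Greenberg's theorem to extract computable constants $N_0, c, s$ from the $\tilde f_i$, and set
\[ N_1 := \max\!\bigl(N_0,\ c(D_{\mathrm{eff}} + d_1 + 1 + s)\bigr), \qquad d_2 := \max\!\bigl(D_{\mathrm{eff}},\ N_1 - d_1 - 1\bigr). \]
Both are clearly computable from $d_1$, $f_1,\ldots,f_r$, and $\alpha$.

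For the direction (1)$\Rightarrow$(2), I would take any full solution $(a(t),\gamma(t))$ and truncate both series at exponent $d_2$. Since $d_2 \geq D_{\mathrm{eff}}$, properties (ii) and (iii) survive unchanged. Writing $\tilde a(t) := t^{d_1} a(t)$ and $\overline{\tilde a}(t)$ for its truncation, we have $\overline{\tilde a}(t) \equiv \tilde a(t) \pmod{t^{d_2+d_1+1}}$, so $\tilde f_i(\overline{\tilde a}(t)) \equiv \tilde f_i(\tilde a(t)) = 0 \pmod{t^{d_2+d_1+1}}$; this yields (i) modulo $t^{N_1}$, since $d_2 + d_1 + 1 \geq N_1$ by construction.

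For the direction (2)$\Rightarrow$(1), I would apply Greenberg's theorem to $\overline{\tilde a}(t)$ with $N = N_1$, producing a true solution $\tilde a(t) \in \widetilde{\Omega}[[t]]^m$ of $\tilde f_1 = \cdots = \tilde f_r = 0$ that agrees with $\overline{\tilde a}(t)$ modulo $t^{\lceil N_1/c\rceil - s}$. The choice of $N_1$ ensures $\lceil N_1/c\rceil - s \geq D_{\mathrm{eff}} + d_1 + 1$, so $a(t) := t^{-d_1}\tilde a(t)$ has no exponents below $-d_1$ and agrees with $\overline a(t)$ on all coefficients of $t^k$ for $k \leq D_{\mathrm{eff}}$; hence (ii) and (iii) are inherited by the pair $(a(t),\overline\gamma(t))$, while (i) holds by construction. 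The main obstacle to manage is that direction (1)$\Rightarrow$(2) forces $N_1 \leq d_2 + d_1 + 1$ whereas the Greenberg step forces $N_1 \geq c(D_{\mathrm{eff}} + d_1 + 1 + s)$; these would conflict if $d_2$ were kept at its minimum $D_{\mathrm{eff}}$, but they are reconciled by inflating $d_2$ up to $N_1 - d_1 - 1$ as above. Beyond this bookkeeping, the argument is essentially routine given the effective version of Greenberg in \cite{Rond18}.
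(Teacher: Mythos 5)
Your strategy is the same as the paper's: bound the degrees of $a(t),\gamma(t)$ that matter for (ii),(iii), form $\tilde f_i$, invoke the effective Greenberg theorem to get $N_0,c,s$, choose $N_1$ large enough that the Greenberg approximant agrees on all those relevant degrees, and choose $d_2$ large enough that truncation preserves everything. The paper describes $N_1,d_2$ by implicit ``large enough'' conditions, whereas you try to give closed formulas, and this is where a bookkeeping slip enters.

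In direction (1)$\Rightarrow$(2) you compute $\tilde f_i(\overline{\tilde a}(t))\equiv \tilde f_i(\tilde a(t))=0\pmod{t^{\,d_2+d_1+1}}$ and conclude ``(i) modulo $t^{N_1}$.'' But condition (i) is stated for $f_i$, not $\tilde f_i$, and $\tilde f_i(\overline{\tilde a}(t))=t^{e}f_i(\overline a(t))$ (where $e$ is the twist introduced when forming $\tilde f_i$, roughly $e=d_1\cdot\max_i\deg f_i$). So what you actually get is $f_i(\overline a(t))\equiv 0\pmod{t^{\,d_2+d_1+1-e}}$, and your choice $d_2:=\max(D_{\mathrm{eff}},\,N_1-d_1-1)$ only gives $d_2+d_1+1\geq N_1$, not $d_2+d_1+1-e\geq N_1$. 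The paper sidesteps this by requiring $d_2$ to be ``large enough so that \dots the terms of degree $>d_2$ in $a(t)$ \dots do not contribute to the terms of degree $<N_1$ in $f_i(a(t))$,'' which incorporates exactly the $e$ correction. The fix to your version is straightforward (take $d_2:=\max(D_{\mathrm{eff}},\,N_1+e-d_1-1)$), so this is a repairable off-by-$e$ error rather than a conceptual flaw, but as written the step fails.
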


\begin{proof}
The implication (1) $\Rightarrow$ (2) holds for any choice of $N_1$
if $d_2$ is chosen large enough so that the terms of
$a(t),\gamma(t)$ with degree $>d_2$ in $t$ do not affect (ii),(iii),
and do not contribute to the terms of degree $<N_1$ in $f_i(a(t))$
for any $i$. 

For the converse, first we compute $\tilde{f}_i$ and $e$ as above; they
depend on the choice of $d_1$. Then we compute $N_0,c,s$ as in Greenberg's
theorem.  Compute $N_1 \geq N_0-e$ such that terms in $a(t),\gamma(t)$
in which $t$ has exponent at least $\lceil \frac{N_1+e}{c} \rceil-s-d_1$ do not affect
properties (ii)--(iii), and then compute $d_2$ as in the first paragraph.

Given a pair $a(t),\gamma(t)$ as in (2), 
set $\bar{a}(t):=t^{d_1} a(t)$. Then, for each $i$,
\[ \tilde{f}_i(\bar{a}(t))=t^e f_i(a(t)) \equiv 0 \mod t^{N_1+e}. \]
Then, since $N_1+e \geq N_0$, Greenberg's theorem yields
$\tilde{a}(t) \in \widetilde{\Omega}[[t]]^n$ such that
$\tilde{f}_i(\tilde{a}(t))=0$ for all $i$ and such that 
\[ \tilde{a}(t) = \bar{a}(t) \mod t^{\lceil \frac{N_1+e}{c} \rceil-s}. \]
Now set $a_1(t):=t^{-d_1}\tilde{a}(t)$, so that 
$f_i(a_1(t))=0$ for all $i$---this is property (i)---and 
\[ a_1(t) \equiv a(t) \mod t^{\lceil \frac{N_1+e}{c} \rceil-s-d_1}. \]
Since the terms of $a(t)$ with exponent of degree at least
$\lceil\frac{N_1+e}{c} \rceil-s-d_1$ do not affect (ii) and (iii), the pair
$a_1(t),\gamma(t)$ also satisfy these conditions.
\end{proof}

\subsection{The procedure $\cert$}

To compute $\cert(B;Q;A;P;\alpha;A';P';\alpha')$, we proceed
as follows. For convenience, we again assume that we have
sufficiently many processors working in parallel.

\begin{enumerate}

\item If $A$ and $A'$ are not both irreducible, decompose $A$ into
irreducible components $A_i$ and $A'$ into irreducible components
$A'_j$, and assign the computation of $\cert(B;Q;A_i;P;\alpha|_{A_i
\times P};A_j',P',\alpha'_{A'_j \times P'})$ for all $i,j$ to distinct
processors. As soon as for each $j$ there exists at least one $i$ such
that the computation returns ``true'', return ``true''.

({\em So in what follows we may assume that $A,A'$ are irreducible.
They are given by prime ideals $I \subseteq
K[x_1,\ldots,x_n]$ and $J \subseteq K[y_1,\ldots,y_m]$, respectively.})

\item Let $f_1,\ldots,f_r$ be generators of $I$.

\item Compute $b:=(\alpha')^{(0)}(a')$ where $a'$ is the generic point
of $A'$. 

{\em So $a'$ is just the $m$-tuple $(y_1+J,\ldots,y_m+J) \in \Omega^m$,
where $\Omega$ is the fraction field of $K[y_1,\ldots,y_m]/J$. }

\item Compute the $\Omega$-valued point $(\alpha')^{(1)}$ of
$\Map(P',Q)$. 

\item Construct a $K$-basis $\gamma_1,\ldots,\gamma_m$ of the vector
space $\Map(P',P)$. 

\item Set $d_1:=0$, $r:=$``false''.

\item While not $r$, perform the steps (8)--(10):

\item From $\alpha$ and $d_1$, compute the natural numbers $N_1,d_2$
from Corollary~\ref{cor:Bound} and make the {\em Ansatz}
$\gamma(t)=\sum_{i=1}^m c_i(t) \gamma_m$ where $c_i(t)$ is a
linear combination of $t^{-d_1},\ldots,t^{d_2}$ with coefficients
to be determined in an extension of $\Omega$; and the {\em Ansatz}
$a(t)=(a_1(t),\ldots,a_n(t))$, where $a_i$ is also a linear combination
of $t^{-d_1},\ldots,t^{d_2}$ with coefficients to be determined.

\item The desired properties of $(a(t),\gamma(t))$ from the second item
of Corollary~\ref{cor:Bound} translate into a system of polynomial
equations for the $(m+n)\cdot(d_2+d_1+1)$ coefficients of the $c_i(t)$
and the $a_i(t)$. By a Gr\"obner basis computation, test whether a
solution exists over an algebraic closure of $\Omega$. If
so, set $r:=$``true''.

\item Set $d_1:=d_1+1$. 

\item Return ``true''.
\end{enumerate}

\begin{proof}[Proof of Proposition~\ref{prop:Cert}]
The first step is justified by the observation that the
image closure of $\alpha$ contains the image of $\alpha'$ if
and only if for each $j$, the image of $\alpha'|_{A'_j
\times P'}$ is contained in the image closure of some
$\alpha|_{A_i \times P}$. 

If the image closure of $\alpha$ contains the image of $\alpha'$,
then by Theorem~\ref{thm:Limit}, Proposition~\ref{prop:Easier},
Lemma~\ref{lm:Explicit}, and Corollary~\ref{cor:Bound}, the procedure
$\cert$ terminates and returns ``true''. Otherwise, by the
same results, the system of equations in step (9) does not
have a solution, and the procedure does not terminate. 
\end{proof}

\section{Conclusion}

The Noetherianity theorem \cite{Draisma17} and the unirationality
theorem \cite{Bik21} imply that closed subsets of polynomial functors
admit two different descriptions: an implicit description by finitely many
equations, and a description as the image (closure) of some morphism. In
this paper, we have described highly nontrivial algorithms $\parm$ and
$\impl$ that translate between these two descriptions. Interestingly,
$\impl$ requires calls to $\parm$ as well as Greenberg's approximation
theorem for a stopping criterion. We do not expect these two algorithms
to be implemented in full generality anytime soon, but we do believe that
their structure may guide us in finding equations for closed subsets of
polynomial functors such as ``cubics of q-rank $\leq 2$''.

\bibliographystyle{alpha}
\bibliography{draismajournal,draismapreprint,diffeq}

\end{document}